\begin{document}

 \newtheorem{thm}{Theorem}[section]
 \newtheorem{cor}[thm]{Corollary}
 \newtheorem{lem}[thm]{Lemma}{\rm}
 \newtheorem{prop}[thm]{Proposition}

 \newtheorem{defn}[thm]{Definition}{\rm}
 \newtheorem{assumption}[thm]{Assumption}
 \newtheorem{rem}[thm]{Remark}
 \newtheorem{ex}{Example}
\numberwithin{equation}{section}

\def\x{\mathbf{x}}
\def\h{\mathbf{h}}
\def\by{\mathbf{y}}
\def\bz{\mathbf{z}}
\def\F{\mathcal{F}}
\def\R{\mathbb{R}}
\def\T{\mathbf{T}}
\def\N{\mathbb{N}}
\def\D{\mathcal{D}}
\def\V{\mathbf{V}}
\def\U{\mathbf{U}}
\def\K{\mathbf{K}}
\def\Q{\mathbf{Q}}
\def\M{\mathbf{M}}
\def\oM{\overline{\mathbf{M}}}
\def\O{\mathbf{O}}
\def\C{\mathbb{C}}
\def\P{\mathbf{P}}
\def\hP{\hat{\mathbf{P}}}
\def\Z{\mathbb{Z}}
\def\H{\mathbf{H}}
\def\A{\mathbf{A}}
\def\V{\mathbf{V}}
\def\AA{\overline{\mathbf{A}}}
\def\B{\mathbf{B}}
\def\c{\mathbf{C}}
\def\L{\mathscr{L}}
\def\bS{\mathbf{S}}
\def\H{\mathbf{H}}
\def\I{\mathbf{I}}
\def\Y{\mathbf{Y}}
\def\X{\mathbf{X}}
\def\G{\mathbf{G}}
\def\f{\mathbf{f}}
\def\z{\mathbf{z}}
\def\v{\mathbf{v}}
\def\y{\mathbf{y}}
\def\d{\hat{d}}
\def\bx{\mathbf{x}}
\def\bI{\mathbf{I}}
\def\y{\mathbf{y}}
\def\g{\mathbf{g}}
\def\w{\mathbf{w}}
\def\b{\mathbf{b}}
\def\a{\mathbf{a}}
\def\u{\mathbf{u}}
\def\q{\mathbf{q}}
\def\e{\mathbf{e}}
\def\s{\mathcal{S}}
\def\f{\boldsymbol{f}}
\def\cc{\mathcal{C}}
\def\co{{\rm co}\,}
\def\tg{\tilde{g}}
\def\tx{\tilde{\x}}
\def\tg{\tilde{g}}
\def\tA{\tilde{\A}}

\def\supmu{{\rm supp}\,\mu}
\def\supp{{\rm supp}\,}
\def\cd{\mathcal{C}_d}
\def\cok{\mathcal{C}_{\K}}
\def\cop{COP}
\def\vol{{\rm vol}\,}
\def\erf{{\rm erf}}
\def\erfc{{\rm erfc}}
\def\xsc{\mathscr{X}}
\def\tsc{\mathscr{T}}
\def\p{\mathbf{p}}
\def\g{\boldsymbol{\hat{g}}}
\def\bof{\boldsymbol{f}}
\def\la{\langle}
\def\ra{\rangle}

\title{Lebesgue decomposition in action via semidefinite relaxations}
\author{Jean B. Lasserre}
\thanks{Jean B Lasserre: 7 Avenue du Colonel Roche, BP 54200, 31031 Toulouse cedex 4, France. \\
Tel: +66561336415; Fax: +33561336936; Email: {\tt lasserre@laas.fr}}
\address{LAAS-CNRS and Institute of Mathematics\\
University of Toulouse\\
LAAS, 7 avenue du Colonel Roche, BP 54200\\
31031 Toulouse C\'edex 4, France\\
Tel: +33561336415}
\email{lasserre@laas.fr}

\date{}
\begin{abstract}
Given all (finite) moments of two measures $\mu$ and $\lambda$ on $\R^n$, we provide a numerical scheme to 
obtain the Lebesgue decomposition 
$\mu=\nu+\psi$ with $\nu\ll\lambda$ and $\psi\perp\lambda$. When
$\nu$ has a density in $L_\infty(\lambda)$ then we obtain two sequences of finite moments vectors
of increasing size (the number of moments) which converge to the moments of $\nu$ and $\psi$ respectively, as the 
number of moments increases. Importantly, {\it no} \`a priori knowledge on the supports of $\mu, \nu$ and $\psi$ is required.
\end{abstract}

\keywords{Lebesgue decomposition; moment problem; convex optimization; semidefinite relaxations; {\bf MSC}: 90C05 90C22 44A60 65R32}
\maketitle

\section{Introduction}

This work is in the line of research 
concerned with the following issue: {\it which type and how much of information 
on the support of a measure can be extracted from its moments} (a research issue 
outlined in a {\it Problem session} at the 2013 Oberwolfach  meeting 
on {\it Structured Function Systems and Applications} \cite{ober-report}). 

- For instance, the old and classical L-moment problem
asks for moment conditions to ensure that the underlying unknown measure $\mu$ is absolutely continuous with respect to some reference measure $\nu$, and with a density in $L_\infty(\nu)$. See for instance 
Diaconis and Freedman \cite{diaconis}, Putinar \cite{putinar1,putinar2}, more recently Lasserre \cite{density}, and the many references therein. 

- In some other works one is interested in recovering information about the {\it support} of a measure from knowledge of its moments 
and some {\it \`a priori} information on its support. For instance, in Lasserre \cite{bounding}, bounds on the support of a measure are provided from the knowledge of 
moments of its marginals.
In Collowald et al. \cite{collowald} and Gravin et al. \cite{gravin}, the support is assumed to be a convex polytope $P\subset\R^n$ and
the vertices of $P$ can be recovered from finitely many directional moments of the Lebesgue measure on $P$. Similarly,
in Lasserre \cite{dcg-2} and Lasserre and Putinar \cite{dcg} one may recover the boundary of a semi-algebraic set $S\subset\R^n$ from moments of the Lebesgue measure on $S$ (or of a measure with polynomial or exponential of a polynomial density w.r.t. the Lebesgue measure).

- In super-resolution one is interested in recovering the discrete support (a finite set of points) of a signal from finitely many moments of its
associated signed measure; see the ground-breaking work of Cand\`es et Fern\'andez-Granda \cite{candes}, and the recent multivariate extension in  de Castro et al. \cite{decastro}.

- Recently in Lasserre \cite{sigma} one is interested in detecting whether a given measure $d\mu(x,t)$ on $[0,1]^2$ with given marginal $dt$ on $[0,1]$
is supported on a curve $(t,x(t))\subset [0,1]^2$ for some measurable function $x:[0,1]\to [0,1]$. Sufficient and necessary conditions on the moments of $\mu$
are provided in \cite{sigma}.\\

\begin{center}
{\it The present paper is concerned with ``computing" the Lebesgue decomposition 
$\nu+\psi$ of a measure $\mu$ with respect to (w.r.t.) another measure $\lambda$ (i.e., with $\nu\ll\lambda$ and $\psi\perp\lambda$), from the sole knowledge
of moments of $\mu$ and $\lambda$.}
\end{center}

\subsection*{Contribution} Given two measures $\lambda,\mu$ on $\R^n$, this paper provides an effective numerical scheme
to obtain the Lebesgue decomposition $\nu+\psi$ of $\mu$ w.r.t. $\lambda$, where 
$\nu\ll\lambda$ and $\psi\perp\lambda$.
\begin{itemize}
\item The only information that we use is the sequence of
moments $(\lambda_\alpha)$ and $(\mu_\alpha)$, $\alpha\in\N^n$, of $\lambda$ and $\mu$. In particular, {\it no}  \`a priori information on the respective supports of $\lambda$ and $\mu$ is required.
\item The methodology is to treat the problem as an infinite-dimensional convex (and even linear) optimization problem on a appropriate space of measures, and then approximate its solution via a hierarchy of semidefinite relaxations $(\P_d)$, $d\in\N$.
That is, each $\P_d$ is a semidefinite program\footnote{A semidefinite program is a finite-dimensional convex  conic optimization problem which can be solved efficiently (up to arbitrary precision fixed in advance) in time polynomial in its input size.} whose size increases with $d$ (as more moments of $\mu$ and $\lambda$ are taken into account). It also includes a scalar parameter $\gamma>0$, fixed in advance.
\item 
The output is under the form of two finite sequences $(y_\alpha)$ and $(\mu_\alpha-y_\alpha)$, $\alpha\in\N^n_d$, whose length ${n+d\choose n}$
is the number of power moments up to order $d$.
When $\nu\ll\lambda$ has a density $f\in L_\infty(\lambda)$ with $\Vert f\Vert_\infty\leq\gamma$, 
then the two sequences converge (as $d$ increases) to the respective moments of $\nu$ and $\psi$ in the Lebesgue decomposition $\nu+\psi=\mu$.
Otherwise if $\Vert f\Vert_\infty>\gamma$ or if $f\not\in L_\infty(\lambda)$, the 
two sequences converge to the respective moments of $d\nu_\gamma:=(\gamma\wedge f)d\lambda$ and $\psi_\gamma:=\mu-\nu_\gamma$.
\end{itemize}

We do not treat the problem in full generality as 
one obtains the required information on the Lebesgue decomposition $(\nu,\psi)$ only when the density of $\nu$ w.r.t. $\lambda$ is
in $L_\infty(\lambda)$ with norm bounded by $\gamma$, fixed \`a priori. Otherwise one obtains a partial information only. However, so far and to the best of our knowledge, this is the first systematic numerical scheme at this level of generality.

As some (simple) examples treated in this paper show, some numerical issues should be taken into account.
For instance, for clarity of exposition and for convenience of algorithm implementation,
we have chosen power moments associated with the basis of monomials $(\x^\alpha)$, $\alpha\in\N$.
This is typically a bad choice from a numerical viewpoint, especially as we use semidefinite solvers for which such issues can be crucial even for relatively small size matrices.
For instance, a better choice would be a basis of polynomials orthogonal w.r.t. to $\mu$ and/or w.r.t. $\lambda$ (we explain later
how both bases can be used). However such issues are beyond the scope of the present paper devoted to the basic methodology.

\section{Lebesgue decomposition and convex optimization}

\subsection{Notation and definition}
Given a Borel set $\X\subset\R^n$ denote by $M(\X)$ the Banach space of finite signed measures on $\X$, equipped with the total variation norm, and denote by $M(\X)^*$ its topological dual. The notation $M(\X)_+\,(\subset M(\X))$ stands for the convex cone of finite positive measures on $\X$. Denote by $B(\X)$ the Banach space of bounded measurable functions on $\X$, equipped with the sup-norm $\Vert f\Vert_\infty=\sup_{\x\in \X}\vert f(\x)\vert$, and by $B(\X)_+$ the convex cone of nonnegative elements of $B(\X)$.

Let $L_1(\X,\lambda)$ denote the Lebesgue space of $\lambda$-integrable functions, a Banach space when  equipped with the norm
$\Vert f\Vert_1=\int \vert f\vert d\lambda$. Its topological dual is the Lebesgue space $L_\infty(\X,\lambda)$ of functions 
whose essential supremum $\Vert f\Vert_\infty$ (w.r.t. $\lambda$) is finite. The notation $L_1(\X,\lambda)_+$ (resp. $L_\infty(\X,\lambda)_+$) stands for the convex cone of the nonnegative elements of $L_1(\X,\lambda)$ (resp. of $L_\infty(\X,\lambda)$).

Given two Borel measures $\mu,\lambda\in M(\X)_+$ there is a unique Lebesgue decomposition $\mu=\nu+\psi$ with $\nu,\psi\in M(\X)_+$
and $\nu\ll\lambda$, $\psi\perp\lambda$. The notation $\nu\ll\lambda$ means that $\nu$ is absolutely continuous with respect to (w.r.t.) 
$\lambda$ whereas the notation $\psi\perp\lambda$ means that $\psi$ is singular w.r.t. $\lambda$.
Given $\lambda\in M(\X)_+$, the set $C_\lambda\subset M(\X)_+$ defined as:
\[C_\lambda\,:=\,\{\,\nu\in M(\X)_+:\: \nu\,\ll\,\lambda\,\},\]
is a convex cone. If one considers the {\it dual pair} of vector spaces $(M(\X),B(\X))$ with duality bracket
\[\langle \nu,f\rangle \,=\,\int_\X f\,d\nu,\qquad (\nu,f)\,\in\,\M(\X)\times B(\X),\]
the dual cone of $C_\lambda$ denoted $C_\lambda^*\subset B(\X)$ is defined by:
\begin{eqnarray*}
C_\lambda^*&:=&\{\,f\in B(\X):\:\langle \nu,f\rangle \,\geq\,0,\quad\forall\nu\in C_\lambda\,\}\\
&=&\{\,f\in B(\X):\: \int_\X f\,h\,d\lambda \geq0,\quad\forall h\in L_1(\X,\lambda)_+\,\}\,\simeq\,L_\infty(\X,\lambda)_+.
\end{eqnarray*}

Let $\R[\x]$ be the ring of polynomials in the variables
$\x=(x_1,\ldots,x_n)$.
Denote by $\R[\x]_d\subset\R[\x]$ the vector space of
polynomials of degree at most $d$, which forms a vector space of dimension $s(d)={n+d\choose d}$, with e.g.,
the usual canonical basis $(\x^\alpha)$, $\alpha\in\N^n$, of monomials.
Also, let $\N^n_d:=\{\alpha\in\N^n\,:\,\sum_i\alpha_i\leq d\}$ and
denote by $\Sigma[\x]\subset\R[\x]$ (resp. $\Sigma[\x]_d\subset\R[\x]_{2d}$)
the space of sums of squares (SOS) polynomials (resp. SOS polynomials of degree at most $2d$). 
If $f\in\R[\x]_d$, write
\[f(\x)\,=\,\sum_{\alpha\in\N^n_d}f_\alpha\, \x^\alpha\quad\left(= \sum_{\alpha\in\N^n_d}f_\alpha \,\x_1^{\alpha_1}\cdots \x_n^{\alpha_n}\right),\]
in the canonical basis and
denote by $\f=(f_\alpha)\in\R^{s(d)}$ its vector of coefficients. Finally, let $\s^n$ denote the space of 
$n\times n$ real symmetric matrices, with inner product $\la \A,\B\ra ={\rm trace}\,\A\B$, and where the notation
$\A\succeq0$ (resp. $\A\succ0$) stands for $\A$ is positive semidefinite. 

Let $(\A_j)$, $j=0,\ldots,s$,  be a set of real symmetric matrices. An inequality of the form
\[\left(\A(\x)\,:=\,\right)\: \A_0+\sum_{k=1}^s\A_k\,x_k\:\succeq0,\quad \x\in\R^s,\]
is called a {\it Linear Matrix Inequality} (LMI) and a set of the form $\{\x: \A(\x)\succeq0\}$ is
the canonical form of the feasible set of semidefinite programs\footnote{The canonical form of a semidefinite program is
$``\inf \,\{\mathbf{c}^T\x: \A(\x)\succeq0\}$" where $\mathbf{c}\in\R^s$ and $\A_k$, $k=0,\ldots,s$, are real symmetric matrices.}.

Given a real sequence $\z=(z_\alpha)$, $\alpha\in\N^n$, define the Riesz linear functional $L_\z:\R[\x]\to\R$ by:
\[f\:(=\sum_\alpha f_\alpha\x^\alpha)\quad\mapsto L_\z(f)\,=\,\sum_{\alpha}f_\alpha\,z_\alpha,\qquad f\in\R[\x].\]
A sequence $\z=(z_\alpha)$, $\alpha\in\N^n$, has a representing measure $\mu$ if
\[z_\alpha\,=\,\int_{\R^n}\x^\alpha\,d\mu,\qquad\forall\,\alpha\in\N^n.\]

\subsection*{Moment matrix}
The {\it moment} matrix associated with a sequence
$\z=(z_\alpha)$, $\alpha\in\N^n$, is the real symmetric matrix $\M_d(\z)$ with rows and columns indexed by $\N^n_d$, and whose entry $(\alpha,\beta)$ is just $z_{\alpha+\beta}$, for every $\alpha,\beta\in\N^n_d$. 
Alternatively, let
$\v_d(\x)\in\R^{s(d)}$ be the vector $(\x^\alpha)$, $\alpha\in\N^n_d$, and
define the matrices $(\B_\alpha)\subset\s^{s(d)}$ by
\begin{equation}
\label{balpha}
\v_d(\x)\,\v_d(\x)^T\,=\,\sum_{\alpha\in\N^n_{2d}}\B_\alpha\,\x^\alpha,\qquad\forall\x\in\R^n.\end{equation}
Then $\M_d(\z)=\sum_{\alpha\in\N^n_{2d}}z_\alpha\,\B_\alpha$.
If $\z$ has a representing measure $\mu$ then
$\M_d(\z)\succeq0$ because
\[\langle\f,\M_d(\z)\f\rangle\,=\,\int f^2\,d\mu\,\geq0,\qquad\forall \,\f\,\in\R^{s(d)}.\]
In this case 
\begin{equation}
\label{riesz-integral}
L_\z(f)\,=\,\int f\,d\mu,\qquad\forall f\in\R[\x].
\end{equation}

A measure whose all moments are finite is said to be {\it moment determinate} if there is no other measure with the same moments.
The support of a Borel measure $\mu$ on $\R^n$ (denoted $\supmu$) is the smallest closed set $\K$ such that $\mu(\R^n\setminus\K)=0$.

A sequence $\z=(z_\alpha)$, $\alpha\in\N^n$, satisfies Carleman's condition if
\begin{equation}
\label{carleman}
\sum_{k=1}^\infty L_\z(x_i^{2k})^{-1/2k}\,=\,+\infty,\qquad\forall i=1,\ldots,n.
\end{equation}
If a sequence $\z=(z_\alpha)$, $\alpha\in\N^n$, satisfies Carleman's condition (\ref{carleman}) and $\M_d(\z)\succeq0$ for all $d=0,1,\ldots$, then
$\z$ has a representing measure on $\R^n$ which is moment determinate; see e.g. \cite[Proposition 3.5]{lass-book-icp}.
In particular a sufficient condition for a measure $\mu$ to satisfy Carleman's condition 
is that $\int \exp(c\sum_i\vert x_i\vert)d\mu <\infty$ for some $c>0$.

For more details on the above notions as well as their use in potential applications, the interested reader is referred to Lasserre \cite{lass-book-icp}.

\subsection{Lebesgue decomposition as a convex optimization problem}
Given two finite Borel measures $\mu,\lambda\in M(\X)_+$,
consider the infinite-dimensional optimization problem:
\begin{eqnarray}
\nonumber
\P:\quad &\rho=&\displaystyle\sup_\nu\,\{\nu(\X):\nu\leq\mu;\quad \nu\ll\lambda;\:\nu\in M(\X)_+\,\}\\
\label{conv-pb}
&=&\displaystyle\sup_{\nu,\psi}\,\{\nu(\X):\nu+\psi\,=\,\mu;\quad\nu\in C_\lambda,\:\psi\in M(\X)_+\,\},
\end{eqnarray}
where the notation $\nu\leq\mu$ is understood {\it setwise}, i.e., $\nu(B)\leq \mu(B)$ for all Borel sets $B\subset\R^n$.

\begin{thm}
\label{th-abstract-1}
The optimization problem (\ref{conv-pb}) has a unique optimal solution $\nu^*\in M(\X)_+$
and $(\nu^*,\mu-\nu^*)$ provides the Lebesgue decomposition of $\mu$ w.r.t. $\lambda$.
\end{thm}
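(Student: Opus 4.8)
The plan is to exploit the uniqueness of the Lebesgue decomposition $\mu = \bar\nu + \bar\psi$ (with $\bar\nu \ll \lambda$ and $\bar\psi \perp \lambda$) guaranteed by the Radon--Nikodym theorem, and to show that the decomposition's $\bar\nu$-part is precisely the maximizer of $(\ref{conv-pb})$. First I would observe that the two formulations of $\P$ in $(\ref{conv-pb})$ agree: if $\nu \le \mu$ setwise with $\nu \in M(\X)_+$, then $\psi := \mu - \nu$ is again a nonnegative finite measure, so writing $\mu = \nu + \psi$ with $\psi \in M(\X)_+$ is equivalent to $\nu \le \mu$. Hence the feasible set is $\{\nu \in C_\lambda : 0 \le \nu \le \mu\}$, which is nonempty (it contains $\nu = 0$) and on which the linear functional $\nu \mapsto \nu(\X)$ is bounded above by $\mu(\X) < \infty$; so $\rho \le \mu(\X)$ and the supremum is finite.

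Next I would show $\bar\nu$ is feasible and optimal, giving $\rho = \bar\nu(\X)$. Feasibility: $\bar\nu \ll \lambda$ by construction, and $\bar\nu \le \bar\nu + \bar\psi = \mu$ setwise since $\bar\psi \ge 0$. Optimality: let $\nu$ be any feasible point, so $\nu \ll \lambda$ and $\nu \le \mu$. The key step is to compare $\nu$ with $\bar\nu$ on the set where $\lambda$ "lives." Since $\bar\psi \perp \lambda$, there is a Borel set $A$ with $\lambda(\X \setminus A) = 0$ and $\bar\psi(A) = 0$. For any Borel $B \subseteq A$, we have $\nu(B) \le \mu(B) = \bar\nu(B) + \bar\psi(B) = \bar\nu(B)$. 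For $B \subseteq \X \setminus A$, absolute continuity $\nu \ll \lambda$ together with $\lambda(\X\setminus A) = 0$ forces $\nu(\X \setminus A) = 0$, while likewise $\bar\nu(\X \setminus A) = 0$. Splitting $\X = A \cup (\X \setminus A)$ then yields $\nu(\X) = \nu(A) \le \bar\nu(A) = \bar\nu(\X)$. Thus $\bar\nu$ attains the supremum, and $\rho = \bar\nu(\X)$.

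Finally I would establish uniqueness of the optimal solution. Suppose $\nu^*$ is any maximizer, so $\nu^* \ll \lambda$, $\nu^* \le \mu$, and $\nu^*(\X) = \bar\nu(\X)$. Applying the inequality $\nu^*(B) \le \bar\nu(B)$ obtained above (valid for every Borel $B \subseteq \X$, since $\nu^*(B) = \nu^*(B \cap A) \le \bar\nu(B \cap A) = \bar\nu(B)$), the measure $\bar\nu - \nu^*$ is nonnegative with total mass $\bar\nu(\X) - \nu^*(\X) = 0$; hence $\bar\nu - \nu^* \equiv 0$, i.e. $\nu^* = \bar\nu$. Then $\mu - \nu^* = \mu - \bar\nu = \bar\psi$, so $(\nu^*, \mu - \nu^*)$ is exactly the Lebesgue decomposition. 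The main obstacle is the optimality argument: one must correctly use the singular set $A$ of $\lambda$ and the absolute continuity of the competitor $\nu$ simultaneously, to conclude that any feasible $\nu$ is dominated by $\bar\nu$ setwise — everything else is bookkeeping on finite positive measures.
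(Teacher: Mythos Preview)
Your proof is correct but takes a genuinely different route from the paper's. The paper argues existence of a maximizer via weak sequential compactness: the feasible set $\{\nu\in M(\X)_+:\nu\le\mu\}$ is bounded, hence weakly sequentially compact by a result in Dunford--Schwartz, so a maximizing sequence has a setwise-convergent subsequence whose limit $\nu^*$ inherits $\nu^*\ll\lambda$; one then shows $(\mu-\nu^*)\perp\lambda$ by contradiction, invoking the Lebesgue decomposition of the remainder $\mu-\nu^*$. You instead start from the Lebesgue decomposition $\mu=\bar\nu+\bar\psi$ as a given and prove directly, using the singular set $A$ of $\bar\psi$, that every feasible $\nu$ satisfies $\nu\le\bar\nu$ setwise. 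Your argument is more elementary---no compactness machinery is needed---and it yields the stronger intermediate fact $\nu\le\bar\nu$ for all feasible $\nu$, from which both optimality and uniqueness fall out in one line. The paper's approach, by contrast, is phrased in the language of infinite-dimensional linear programming (maximizing sequence, weak limit, feasibility of the limit), which fits the broader narrative of the article and foreshadows the compactness arguments used later for the semidefinite relaxations.
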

\begin{proof}
The set $\Delta:=\{\nu\in M(\X)_+:\nu\leq\mu\,\}\subset M(\X)_+$ is not empty. Moreover
it is bounded since $\nu(\X)\leq\mu(\X)$ for all $\nu\in\Delta$. Therefore the countable additivity 
of $\nu$ on $\X$ is uniform with respect to $\nu\in\Delta$. Hence by Dunford \& Schwartz \cite[Theorem 1, p. 305]{dunford}, the set $\Delta$ is weakly sequentially compact. Therefore let
$(\nu_n)\subset\Delta$, $n\in\N$, be a maximizing sequence of (\ref{conv-pb}).
By weak sequential compactness of $\Delta$, there exists 
$\nu^*\in \Delta$ and a subsequence $(n_k)$, $k\in\N$, such that 
\[\lim_{k\to\infty}\int f\,d\nu_{n_k}\,=\,\int f\,d\nu^*,\quad\forall f\in M(\X)^*,\]
and in particular, {\it setwise} convergence takes place, i.e.,
\[\lim_{k\to\infty}\nu_{n_k}(B)\,=\,\nu^*(B),\quad\forall B\in\mathcal{B}(\X).\]
This also implies $\rho=\lim_{k\to\infty}\nu_{n_k}(\X)=\nu^*(\X)$. 
Next, as $\nu_{n_k}\ll\lambda$ for all $k$, a consequence of the above setwise convergence 
is that $\nu^*\ll\lambda$. 
It remains to prove that $\psi^*:=(\mu-\nu^*)\perp\lambda$. 
Assume that this is not the case. Then the Lebesgue decomposition of
$\psi^*$ w.r.t. $\lambda$ yields that $\psi^*=\varphi+\chi$ with $0\neq\varphi\ll\lambda$ and $\chi\perp\lambda$. 
In addition, as $\nu^*+\psi^*=\mu$ we also have $\nu^*+\varphi\leq\mu$.
But then $\tilde{\nu}:=(\nu^*+\varphi)\in\Delta$ and $\tilde{\nu}(\X)=\rho+\varphi(\X)>\rho$,
a contradiction. Therefore $\nu^*\ll\lambda$ and $\psi^*\perp\lambda$ which proves that 
$(\nu^*,\psi^*)$ is the Lebesgue decomposition of $\mu$ w.r.t. $\lambda$. Uniqueness of the latter implies 
that $\nu^*$ is the unique optimal solution of $\P$.
\end{proof}
Observe that $\P$ is a {\it convex} (but infinite dimensional) conic optimization problem, and in fact even an infinite dimensional linear programming problem (or {\it linear program} (LP)). Its dual $\P^*$ is the linear program
\begin{eqnarray}
\nonumber
\P^*:\quad &\rho^*=& \displaystyle\inf_f\,\{\,\int_\X f\,d\mu:\quad  f-1\,\in\,C^*_\lambda;\: f\in B(\X)_+\,\}\\
\label{pb-dual}
&=& \displaystyle\inf_f\,\{\,\int_\X f\,d\mu:\quad  f-1\,\geq\,0\quad\mbox{$\lambda$-a.e.};\quad f\in B(\X)_+\,\},\end{eqnarray}
and by standard weak duality $\rho^*\geq\rho$. In fact:
\begin{lem}
\label{lem-dual}
Let $(\nu^*,\psi^*)$ be the unique optimal solution of $\P$, and let $B^*\in \mathcal{B}(\X)$ be a Borel set such that
$\lambda(B^*)=\lambda(\X)$ and $\psi^*(\X\setminus B^*)=\psi^*(\X)$. Then an optimal solution of $\P^*$ is the function $f^*\in B(\X)_+$  such that 
$f^*(\x)=0$ on $\X\setminus B^*$ and $f^*(\x)=1$ on $B^*$.
\end{lem}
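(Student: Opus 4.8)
The plan is to verify that the proposed $f^*$ is feasible for $\P^*$ and that its objective value equals $\rho$; by weak duality this forces optimality of $f^*$ (and incidentally $\rho^*=\rho$). First I would check feasibility: by construction $f^*$ is the indicator $\mathbf{1}_{B^*}$, so $f^*\in B(\X)_+$ is immediate, and $f^*-1=0$ on $B^*$ while $f^*-1=-1$ on $\X\setminus B^*$. Since $\lambda(\X\setminus B^*)=\lambda(\X)-\lambda(B^*)=0$, we have $f^*-1\geq 0$ $\lambda$-almost everywhere, i.e. $f^*-1\in C^*_\lambda$ in the sense of \eqref{pb-dual}. Thus $f^*$ is feasible for $\P^*$.

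Next I would compute $\int_\X f^*\,d\mu$. Using $\mu=\nu^*+\psi^*$ from Theorem \ref{th-abstract-1}, write $\int f^*\,d\mu=\int f^*\,d\nu^*+\int f^*\,d\psi^*$. For the second term, $\psi^*$ is concentrated on $\X\setminus B^*$ (that is $\psi^*(\X\setminus B^*)=\psi^*(\X)$), and $f^*=0$ there, so $\int f^*\,d\psi^*=0$. For the first term, $\nu^*\ll\lambda$ and $\lambda(\X\setminus B^*)=0$ imply $\nu^*(\X\setminus B^*)=0$, hence $\nu^*$ is concentrated on $B^*$ where $f^*=1$; therefore $\int f^*\,d\nu^*=\nu^*(B^*)=\nu^*(\X)=\rho$. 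Combining, $\int_\X f^*\,d\mu=\rho=\rho^*$, and since $f^*$ is feasible and weak duality gives $\rho^*\geq\rho$, the function $f^*$ attains the infimum in $\P^*$ and is therefore optimal.

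The only genuinely substantive point — and the one I would state carefully rather than wave at — is the existence of the Borel set $B^*$ with $\lambda(B^*)=\lambda(\X)$ and $\psi^*(\X\setminus B^*)=\psi^*(\X)$; but this is precisely the definition of $\psi^*\perp\lambda$, which is guaranteed by Theorem \ref{th-abstract-1}, so no real obstacle arises. Everything else is a routine splitting of the integral across $B^*$ and its complement, using absolute continuity on one side and mutual singularity on the other. One should only take care that $f^*$, being a bounded measurable function, indeed lies in $B(\X)_+$ and that the pairing $\langle\,\cdot\,,f^*\rangle$ is well defined against finite measures, which it is.
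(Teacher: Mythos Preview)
Your proof is correct and follows essentially the same route as the paper: verify that $f^*=\mathbf{1}_{B^*}$ is feasible for $\P^*$, compute $\int_\X f^*\,d\mu$ by splitting $\mu=\nu^*+\psi^*$ (the paper equivalently computes $\mu(B^*)=\nu^*(B^*)+\psi^*(B^*)$), and conclude via weak duality $\rho^*\geq\rho$. The only cosmetic remark is that in your sentence ``$\int_\X f^*\,d\mu=\rho=\rho^*$'' the second equality is what you are about to deduce, not something already known; it would read more cleanly to write $\int_\X f^*\,d\mu=\rho$, note that feasibility gives $\rho^*\leq\rho$, and then combine with $\rho^*\geq\rho$.
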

\begin{proof}
The function $f^*$  in Lemma \ref{lem-dual} is feasible for $\P^*$ with associated value
\[\rho^*\,\leq\,\int_\X f^*\,d\mu\,=\,\int_{B^*}f\,d\mu\,=\,\mu(B^*)\,=\,\psi^*(B^*)+\nu^*(B^*)\,=\,\nu^*(\X)\,=\,\rho,\]
and the result follows since $\rho^*\geq \rho$.
\end{proof}
So the two LP (\ref{conv-pb}) and (\ref{pb-dual}) provides us with two dual characterizations of the problem. Namely:
\begin{itemize}
\item An optimal solution $(\nu^*,\psi^*)\in M(\X)_+^2$ of (\ref{conv-pb}) identifies the 
Lebesgue decomposition of $\mu$ w.r.t. $\lambda$.
\item An optimal solution $f^*\in B(\X)$ of (\ref{pb-dual}) is the indicator function $1_{B^*}$ of the 
Borel set $B^*$ on which the restriction of $\mu$ (i.e. $\nu^*$) is absolutely continuous w.r.t. $\lambda$.
\end{itemize}

\subsection{A Lebesgue space version}
In the Lebesgue decomposition $(\nu,\psi)$ (with $\psi:=\mu-\nu$) of $\mu$ w.r.t. $\lambda$ as in Theorem \ref{th-abstract-1},
the measure $\nu$ has a Radon-Nikodym derivative $f\in L_1(\X,\lambda)_+$. 

Next, consider the two dual pairs of vector spaces $(L_1(\X,\lambda),L_\infty(\X,\lambda))$ and
$(M(\X),B(\X))$. Introduce the linear mapping
\[\T :L_1(\X,\lambda)\to M(\X);\quad h\mapsto \T h(B)\,:=\,\int_B h\,d\lambda,\quad\forall B\in\mathcal{B}(\X),\]
which is the canonical embedding of $L_1(\X,\lambda)$ into $M(\X)$. 
The adjoint $\T^*:B(\X)\to L_\infty(\X,\lambda)$ is such that
$\langle \T f,h\rangle=\langle f,\T^*h\rangle$ for all $(f,h)\in L_1(\X,\lambda)\times B(\X)$. That is,
\[\T^*:B(\X)\to L_\infty(\X,\lambda);\quad h\mapsto \T^*h\,:=h,\qquad \forall h\in B(\X),\]
is the natural embedding of $B(\X)$ in $L_\infty(\X,\lambda)$.\\

Now introduce the infinite dimensional linear program (LP):
\begin{eqnarray}
\nonumber
\hat{\P}:\quad \theta&=&\displaystyle\sup_{f}\,\{\int_\X fd\lambda: \T f\,\leq\,\mu;\quad f\in L_1(\X,\lambda)_+\,\}\\
\label{pb-lp}
&=&\displaystyle\sup_{f,\psi}\,\{\int_\X fd\lambda: \T f+\psi=\mu;\quad f\in L_1(\X,\lambda)_+;\quad \psi\in M(\X)_+\,\}.
\end{eqnarray}
Indeed (\ref{pb-lp}) is a conic linear program as $L_1(\X,\lambda)_+\subset L_1(\X,\lambda)$ and $M(\X)_+\subset M(\X)$ are two convex cones. The dual of (\ref{pb-lp}) is the linear program:
\begin{equation}
\label{pb-lp-dual}
\hat{\P}^*:\quad \theta^*\,=\,\displaystyle\inf_{h}\,\{\int_\X h\,d\mu: \T^*h-1\geq0;\quad h\in B(\X)_+\,\}.
\end{equation}
Of course by weak duality one has $\theta\leq \theta^*$ but we also have:
\begin{cor}
\label{th-L1}
An optimal solution of the linear program $\hat{\P}$ in (\ref{pb-lp}) is the Radon-Nikodym derivative $f^*\in L_1(\X,\lambda)_+$ 
of $\nu^*$ w.r.t. $\lambda$ and the optimal solution $h^*\in B(\X)_+$ of $\P^*$ is also an optimal solution of 
$\hat{\P}^*$, so that  $\theta=\theta^*=\rho$.
\end{cor}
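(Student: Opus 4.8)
The plan is to set up a bijection between the feasible solutions of $\hat\P$ and those of $\P$ that preserves the objective value, and to observe that the two dual programs $\P^*$ and $\hat\P^*$ literally coincide; the statement then follows from Theorem \ref{th-abstract-1} and Lemma \ref{lem-dual}.

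First I would show that $\theta=\rho$ and that $f^*:=d\nu^*/d\lambda$ is an optimal solution of $\hat\P$. If $f\in L_1(\X,\lambda)_+$ is feasible for $\hat\P$, then $\nu:=\T f$ has density $f\ge0$ w.r.t. $\lambda$, hence $\nu\in C_\lambda$, and the setwise constraint $\T f\le\mu$ is precisely $\mu-\T f\in M(\X)_+$; thus $\nu$ is feasible for $\P$ with the same objective value $\int_\X f\,d\lambda=\nu(\X)$. Conversely, any $\nu$ feasible for $\P$ satisfies $\nu\ll\lambda$, so by the Radon--Nikodym theorem $f:=d\nu/d\lambda\in L_1(\X,\lambda)_+$ satisfies $\T f=\nu$ and is feasible for $\hat\P$ with the same value. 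Therefore $\theta=\rho$, and since by Theorem \ref{th-abstract-1} the measure $\nu^*$ is the unique optimal solution of $\P$ (and $\nu^*\ll\lambda$), its Radon--Nikodym derivative $f^*$ solves $\hat\P$.

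Next I would observe that $\P^*$ in (\ref{pb-dual}) and $\hat\P^*$ in (\ref{pb-lp-dual}) are one and the same linear program. Indeed $\T^*$ is the identity embedding of $B(\X)$ into $L_\infty(\X,\lambda)$, so the constraint $\T^*h-1\ge0$ reads $h\ge1$ $\lambda$-a.e.; through the identification $C^*_\lambda\simeq L_\infty(\X,\lambda)_+$ established above, the constraint $f-1\in C^*_\lambda$ of $\P^*$ says exactly the same thing. As both programs share the objective $h\mapsto\int_\X h\,d\mu$ and the cone constraint $h\in B(\X)_+$, they have the same feasible set and the same value, so $\theta^*=\rho^*$ and the function $h^*$ of Lemma \ref{lem-dual} is an optimal solution of $\hat\P^*$.

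Finally, Lemma \ref{lem-dual} gives $\rho^*=\rho$, while weak duality gives $\theta\le\theta^*$; combining this with $\theta=\rho$ and $\theta^*=\rho^*$ yields $\theta=\theta^*=\rho$. The only substantive point is the first step, and there the work reduces to checking that the setwise inequality $\T f\le\mu$ is exactly membership of $\mu-\T f$ in the cone $M(\X)_+$, together with a single invocation of the Radon--Nikodym theorem; no new compactness or approximation argument beyond those already used for Theorem \ref{th-abstract-1} is needed.
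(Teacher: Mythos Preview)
Your proof is correct and follows essentially the same approach as the paper: identify $\hat\P^*$ with $\P^*$ (hence $\theta^*=\rho^*=\rho$ via Lemma~\ref{lem-dual}), exhibit the Radon--Nikodym derivative $f^*$ of $\nu^*$ as a feasible point of $\hat\P$ with value $\rho$, and close with weak duality. The only minor difference is that you establish $\theta=\rho$ directly by a two-way bijection between the feasible sets of $\P$ and $\hat\P$, whereas the paper proves only $\theta\ge\rho$ (via feasibility of $f^*$) and obtains $\theta\le\rho$ indirectly from $\theta\le\theta^*=\rho$; your route is slightly more self-contained, the paper's slightly more economical.
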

\begin{proof}
One  recognizes that
$\hat{\P}^*$ is another phrasing of $\P^*$ so that $\theta^*=\rho^*=\rho$. Concerning (\ref{pb-lp}) we also have
$\theta\geq\rho$ because the Radon Nikodym derivative $f^*$ of $\nu^*$ w.r.t. $\lambda$  in Theorem \ref{th-abstract-1}
is feasible for $\hat{\P}$, with associated value $\int_\X f^*d\lambda=\nu^*(\X)=\rho$. And so from $\theta\leq\theta^*=\rho$ we deduce
that $f^*$ is an optimal solution of $\hat{\P}$.
\end{proof}
One may also observe that the feasible set \[\Delta'\,:=\,\{\,f\in L_1(\X,\lambda):\quad \T f\,\leq\,\mu;\:f\geq 0\,\}\]
of (\ref{pb-lp}) is a convex subset of $L_1(\X,\lambda)$ which by Dunford \& Schwartz , \cite[Theorem 9, p. 292]{dunford}
is weakly sequentially compact. Therefore as in (\ref{pb-lp}) one minimizes a weakly continuous linear functional, there is an optimal solution $f^*\in L_1(\X,\lambda)_+$.\\

So again the two LP (\ref{pb-lp}) and (\ref{pb-lp-dual}) provides us with two dual characterizations of the problem. Namely:
\begin{itemize}
\item An optimal solution $f^*\in L_1(\X,\lambda)_+$ of (\ref{pb-lp}) identifies the 
Radon-Nikodym derivative of $\nu^*$ w.r.t. $\lambda$.
\item An optimal solution $h^*\in B(\X)$ of (\ref{pb-lp-dual}) is the indicator function $1_{B^*}$ of the 
Borel set $B^*$ on which the restriction of $\mu$ (i.e. $\nu^*$) is absolutely continuous w.r.t. $\lambda$.

\end{itemize}

\subsection{Some considerations about possible computation}
Recall that all we know about the problem data is the respective moment sequences of $\mu$ and $\lambda$.
In this context we argue that trying to solve the LP (\ref{pb-lp}) may  not be a good strategy. Indeed by using
that polynomials are dense in $L_1(\X,\lambda)$ when all moments of $\lambda$ exist, one 
is tempted to replace (\ref{pb-lp}) with
\begin{equation}
\label{pb-lp-poly}
\tilde{\P}:\quad \rho'=\displaystyle\sup_{p\in\R[\x]}\,\{\int_\X p\,d\lambda: \T p\,\leq\,\mu;\quad p\in \mathcal{P}(\X)\,\}
\end{equation}
where $\mathcal{P}(\X)$ the convex cone of polynomials that are nonnegative on $\X$. 
Let $f^*\in L_1(\X,\lambda)$ be an optimal solution of $\hat{\P}$ as in Corollary \ref{th-L1}.
If there is a sequence $(p_k)$, $k\in\N$, of polynomials such that $p_k\leq f^*$ for all $k$ and
$\int_\X (f^*-p_k)d\lambda\to 0$ as $k\to\infty$, then both problems $\tilde{\P}$ and $\hat{\P}$ have same optimal value
(but in general $\tilde{\P}$ does not have an optimal solution, except of course if
$\nu$ has a polynomial density). However, even in this case
the difficulty is how to handle the constraint $\T p_k\leq\mu$ (for all $k$) only from the knowledge of the moments of $\mu$.
Therefore in a (hypothetic) maximizing sequence $(p_k)\subset L_1(\X,\lambda)_+$, of (\ref{pb-lp-poly}) 
(where $\T p_k\not\leq \mu$) difficulties arise
for the limit as $k\to\infty$ because if we do no have $\T p_k\leq\mu$ for all $k$ then 
we cannot invoke a setwise convergence argument to show that $\T p_k\to \nu^*$.

An alternative is to work in $M(\X)$ rather than in $L_1(\X,\lambda)$, i.e., to try to solve the 
LP (\ref{conv-pb}). The difficulty is now how to handle the constraint
$\nu\ll \lambda$ only from knowledge of moments of $\lambda$. Fortunately, we know how to do that
if the density $f^*$ of $\nu^*\ll\lambda$ is assumed to be in $L_\infty(\X,\lambda)_+$.  Indeed in this case we may invoke the following result:
\begin{thm}
\label{th-carleman}
Let $\X\subset\R^n$ and let $(\nu_\alpha)$ and $(\lambda_\alpha)$, $\alpha\in\N^n$, be the respective moment sequences 
of a finite Borel measure $\nu$ and $\lambda$ on $\X$. Let $\M_d(\nu)$ and $\M_d(\lambda)$, be their respective moment matrices, $d=0,1,\ldots$. Assume that $(\lambda_\alpha)_{\alpha\in\N^n}$ satisfies Carleman's condition (\ref{carleman}). Then the following two statements are equivalent:

(a) $\nu\ll \lambda$ with density $f\in L_\infty(\X,\lambda)_+$ and $\Vert f\Vert_\infty\leq\gamma$.

(a) $\M_d(\nu)\preceq \gamma\,\M_d(\lambda)$ for every integer $d$ and for some $\gamma>0$.
\end{thm}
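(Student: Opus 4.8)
The plan is to prove the equivalence in Theorem~\ref{th-carleman} by a ``test against squares'' argument in one direction, and a Carleman-based approximation argument in the other.

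\textbf{From (a) to (b).} Suppose $\nu\ll\lambda$ with density $f\in L_\infty(\X,\lambda)_+$ and $\Vert f\Vert_\infty\leq\gamma$. For any vector $\p\in\R^{s(d)}$ of coefficients, let $p\in\R[\x]_d$ be the associated polynomial. Then, using \eqref{riesz-integral} for both $\nu$ and $\lambda$,
\[
\la\p,\M_d(\nu)\p\ra\,=\,\int_\X p^2\,d\nu\,=\,\int_\X p^2\,f\,d\lambda\,\leq\,\gamma\int_\X p^2\,d\lambda\,=\,\gamma\,\la\p,\M_d(\lambda)\p\ra,
\]
where the inequality uses $0\leq f\leq\gamma$ $\lambda$-a.e. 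Since $\p$ is arbitrary, $\M_d(\nu)\preceq\gamma\,\M_d(\lambda)$, and this holds for every $d$. This direction is routine; the only thing to note is that all moments of $\nu$ are finite since $\nu(\X)\leq\gamma\,\lambda(\X)<\infty$ and $\nu\ll\lambda$ with bounded density gives finiteness of each $\int\x^\alpha d\nu$ from finiteness of $\int\x^\alpha d\lambda$ (which holds because $\lambda$ satisfies Carleman).

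\textbf{From (b) to (a).} Assume $\M_d(\nu)\preceq\gamma\,\M_d(\lambda)$ for all $d$. First, taking $d=0$ shows $\nu(\X)\leq\gamma\,\lambda(\X)$, so $\nu$ is finite; more importantly, $\M_d(\nu)\preceq\gamma\,\M_d(\lambda)$ forces $L_\nu(x_i^{2k})\leq\gamma\,L_\lambda(x_i^{2k})$ for all $i,k$, so $\nu$ inherits Carleman's condition from $\lambda$; hence $\nu$ (and $\lambda$) are moment determinate with representing measures on $\X$, by the cited \cite[Proposition 3.5]{lass-book-icp}. The key step is to show $\nu\ll\lambda$ with a density bounded by $\gamma$, i.e.\ that $\gamma\,\lambda-\nu$ is a (positive) measure. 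Consider the sequence $\z:=\gamma\,(\lambda_\alpha)-(\nu_\alpha)$. By hypothesis $\M_d(\z)=\gamma\,\M_d(\lambda)-\M_d(\nu)\succeq0$ for all $d$, and $\z$ satisfies Carleman's condition since $L_\z(x_i^{2k})\leq\gamma\,L_\lambda(x_i^{2k})$. Therefore, again by \cite[Proposition 3.5]{lass-book-icp}, $\z$ has a representing measure $\sigma\in M(\X)_+$, which is moment determinate. Since the moments of $\sigma+\nu$ coincide with those of $\gamma\lambda$, and $\gamma\lambda$ is moment determinate, we get $\sigma+\nu=\gamma\,\lambda$ as measures, hence $\nu\leq\gamma\,\lambda$ setwise, which gives $\nu\ll\lambda$ with Radon--Nikodym derivative $f=d\nu/d\lambda$ satisfying $0\leq f\leq\gamma$ $\lambda$-a.e., i.e.\ $f\in L_\infty(\X,\lambda)_+$ with $\Vert f\Vert_\infty\leq\gamma$.

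\textbf{Main obstacle.} The crux is the passage from the matrix inequality (a sequence-level, ``polynomial test function'' statement) to the measure-level domination $\nu\leq\gamma\lambda$. This is exactly where Carleman's condition is essential: positive semidefiniteness of all $\M_d(\z)$ only guarantees a representing measure under an additional growth restriction, and moment determinacy is needed to identify $\sigma+\nu$ with $\gamma\lambda$ rather than merely matching moments. Without Carleman one could have distinct measures with the same moments and the conclusion could fail; so the care is in checking that Carleman for $\lambda$ propagates to $\nu$ and to $\z=\gamma\lambda-\nu$, which follows from the termwise inequalities $0\leq L_\nu(x_i^{2k}),L_\z(x_i^{2k})\leq\gamma\,L_\lambda(x_i^{2k})$ extracted from the diagonal of the LMI.
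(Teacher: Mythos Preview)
Your proof is correct. The direction (a)$\Rightarrow$(b) is exactly the paper's argument. For (b)$\Rightarrow$(a), the paper does not give a proof at all but simply invokes \cite[Theorem~3.13]{lass-book-icp}; you instead supply a self-contained argument using only the Carleman machinery already set up in the paper (namely \cite[Proposition~3.5]{lass-book-icp}). Your route---form the difference sequence $\z=\gamma(\lambda_\alpha)-(\nu_\alpha)$, read off $\M_d(\z)\succeq0$ and the diagonal bounds $L_\z(x_i^{2k})\leq\gamma L_\lambda(x_i^{2k})$ to propagate Carleman, produce a representing measure $\sigma\geq0$, and then use moment determinacy of $\gamma\lambda$ to identify $\sigma+\nu=\gamma\lambda$---is the natural way to unpack the cited result and is entirely sound. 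The payoff of your approach is that the reader does not need the external reference, and your ``Main obstacle'' paragraph correctly isolates where Carleman is indispensable (both to manufacture $\sigma$ from positive semidefiniteness and to pass from equality of moments to equality of measures). The paper's approach has the virtue of brevity, but yours makes the mechanism transparent.
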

\begin{proof}
The implication (a) $\Rightarrow$ (b) is straighforward. Indeed, let $\gamma$ be as in (a). Then:
\[\int_\X g^2\,d\nu\,=\,\int_\X g^2 fd\lambda\,\leq\,\Vert f\Vert_\infty\,\int_\X g^2d\lambda\,\leq\,\gamma\,\int_\X g^2d\lambda
,\qquad \forall\,g\in\R[\x]_d,\]
which shows that $\M_d(\nu)\preceq \gamma\,\M_d(\lambda)$. The reverse implication follows from
\cite[Theorem 3.13]{lass-book-icp}.
\end{proof}

In the next section we will see that the constraint $\M_d(\nu)\preceq\gamma\,\M_d(\lambda)$ is easy to implement
as it is a linear matrix inequality on the unknown moments of $\nu$.

\section{A numerical  approximation scheme}

In this section we will show how to obtain
the Lebesgue decomposition 
$\mu=\nu+\psi$ of $\mu$ w.r.t. $\lambda$, when the absolutely continuous part
$\nu\ll\lambda$ has a density $f\in L_\infty(\X,\lambda)_+$ (instead of $f\in L_1(\X,\lambda)_+$) with $\Vert f\Vert_\infty\leq\gamma$.
We also assume that 
both $\lambda$ and $\mu$ satisfy Carleman's condition (\ref{carleman}) (automatically satisfied when $\X$ is compact).

With this additional restriction we next see that one may indeed provide a numerical scheme 
to approximate the mass of $\nu$, and in fact, any fixed (arbitrary) number of  moments of $\nu$ and $\psi$.
In addition, when  $\psi$ is supported on finitely many atoms, one can 
sometimes extract the support of $\psi$.

In fact, when $\nu\ll\lambda$ with a density $f\not\in L_\infty(\X,\lambda)_+$ or with a density $f\in L_\infty(\X,\lambda)_+$ 
such that $\Vert f\Vert_\infty >\gamma$,
the procedure that we describe below will provide in the limit, the moment sequence of the measure 
$\nu_\gamma\ll\lambda$ with density $f_\gamma=\gamma\wedge f$, so that $f_\gamma\in L_\infty(\X,\lambda)_+$ (but
then of course $\psi=\mu-\nu_\gamma$ is not singular w.r.t. $\lambda$).\\

So, with $\gamma>0$ fixed, introduce the following infinite-dimensional optimization problem:
\begin{equation}
\label{conv-pb-gamma}
\rho_\gamma\,=\,\displaystyle\sup_\nu\,\{\nu(\X):\nu\leq\mu;\quad \nu\leq\gamma\,\lambda;\quad\nu\in M(\X)_+\,\}.
\end{equation}
\begin{thm}
\label{th-abstract-gamma}
Let $(\nu^*,\mu-\nu^*)$ be the Lebesgue decomposition of $\mu$ w.r.t. $\lambda$, and let $f^*\in L_1(\X,\lambda)_+$
be the density of $\nu^*$.
The Borel measure $\nu^*_\gamma\ll\lambda$ with density $f^*_\gamma:=\gamma\wedge f^*$ in $L_\infty(\X,\lambda)_+$
is the unique optimal solution of (\ref{conv-pb-gamma}).
\end{thm}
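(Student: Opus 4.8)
The plan is to mimic the structure of the proof of Theorem \ref{th-abstract-1}, adapting it to the extra constraint $\nu\leq\gamma\lambda$. First I would check feasibility and existence of an optimal solution: the set $\Delta_\gamma:=\{\nu\in M(\X)_+:\nu\leq\mu,\ \nu\leq\gamma\lambda\}$ contains $0$, is bounded (since $\nu(\X)\leq\mu(\X)$), and the uniform countable additivity argument via Dunford--Schwartz \cite[Theorem 1, p.\ 305]{dunford} again gives weak sequential compactness; moreover $\Delta_\gamma$ is convex. Taking a maximizing sequence $(\nu_n)\subset\Delta_\gamma$ for (\ref{conv-pb-gamma}), a subsequence converges setwise to some $\nu^*_\gamma$, and setwise limits preserve both inequalities $\nu^*_\gamma\leq\mu$ and $\nu^*_\gamma\leq\gamma\lambda$, so $\nu^*_\gamma\in\Delta_\gamma$ and $\rho_\gamma=\nu^*_\gamma(\X)$. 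In particular $\nu^*_\gamma\ll\lambda$ with a density in $L_\infty(\X,\lambda)_+$ of norm at most $\gamma$.

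Next I would identify this optimizer with the measure $d\nu_\gamma:=(\gamma\wedge f^*)\,d\lambda$. The key point is that, because all competitors in $\Delta_\gamma$ are dominated by both $\mu$ and $\gamma\lambda$, one can work entirely within the Lebesgue decomposition of $\mu$. Write $\mu=\nu^*+\psi^*$ with $\psi^*\perp\lambda$ and $\nu^*=f^*d\lambda$; fix a Borel set $B^*$ carrying $\psi^*$ with $\lambda(B^*)=\lambda(\X)$ and $\psi^*(\X\setminus B^*)=\psi^*(\X)$, as in Lemma \ref{lem-dual}. For any $\nu\in\Delta_\gamma$, the constraint $\nu\leq\gamma\lambda$ forces $\nu\ll\lambda$, hence $\nu(B^*)=0$; so $\nu$ is concentrated on $\X\setminus B^*$, where $\mu$ and $\nu^*$ agree. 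Thus $\nu\leq\mu$ becomes $\nu\leq\nu^*$ as measures on $\X\setminus B^*$, i.e.\ (writing $g$ for the density of $\nu$) $0\leq g\leq f^*$ $\lambda$-a.e., together with $g\leq\gamma$ $\lambda$-a.e. Conversely any such $g$ yields a feasible $\nu$. Therefore (\ref{conv-pb-gamma}) reduces to the pointwise problem $\sup\{\int_\X g\,d\lambda: 0\leq g\leq f^*\wedge\gamma \text{ a.e.}\}$, whose unique (up to $\lambda$-null sets) maximizer is $g=f^*\wedge\gamma=f^*_\gamma$, giving $\nu^*_\gamma=\nu_\gamma$ and $\rho_\gamma=\int_\X f^*_\gamma\,d\lambda$.

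Finally, uniqueness follows because the reduced problem is to maximize the strictly "monotone" functional $g\mapsto\int g\,d\lambda$ over the order interval $[0,f^*_\gamma]$ in $L_1(\X,\lambda)_+$: if $g\leq f^*_\gamma$ a.e.\ and $\int g\,d\lambda=\int f^*_\gamma\,d\lambda$ then $\int(f^*_\gamma-g)\,d\lambda=0$ with $f^*_\gamma-g\geq0$, forcing $g=f^*_\gamma$ $\lambda$-a.e., hence the corresponding measure equals $\nu_\gamma$.

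The main obstacle is the reduction step in the second paragraph: one must argue cleanly that every feasible $\nu$ is concentrated off the singular carrier $B^*$ of $\psi^*$ and therefore satisfies $\nu\leq\nu^*$ rather than merely $\nu\leq\mu$. This is where the absolute-continuity constraint $\nu\leq\gamma\lambda$ (and not just $\nu\ll\lambda$) is essential, and care is needed because $B^*$ is only defined up to $\lambda$-null and $\psi^*$-null modifications; one should note that $\mu=\nu^*$ on every Borel subset of $\X\setminus B^*$ since $\psi^*(\X\setminus B^*)$ contributes nothing there, while on $B^*$ any $\nu\ll\lambda$ charges nothing because $\lambda\big|_{B^*}$ and $\lambda$ agree up to a null set. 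Once this localization is in place, the remaining steps are the routine truncation argument above.
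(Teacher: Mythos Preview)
Your approach is essentially the paper's: reduce to a pointwise density comparison by localizing on the carrier of $\lambda$ in the Lebesgue decomposition, conclude $g\le \gamma\wedge f^*$ $\lambda$-a.e., and deduce optimality and uniqueness from $\int(f^*_\gamma-g)\,d\lambda=0$. Two remarks.

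First, your existence paragraph via weak sequential compactness is correct but superfluous. The paper does not argue existence abstractly; it simply checks that $\nu^*_\gamma:=(\gamma\wedge f^*)\,d\lambda$ is feasible (clear, since $\gamma\wedge f^*\le f^*$ gives $\nu^*_\gamma\le\nu^*\le\mu$, and $\gamma\wedge f^*\le\gamma$ gives $\nu^*_\gamma\le\gamma\lambda$) and then shows every feasible $\nu$ satisfies $\nu(\X)\le\nu^*_\gamma(\X)$. Your compactness detour is harmless but unnecessary.

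Second, and more importantly, you have systematically interchanged $B^*$ and $\X\setminus B^*$. With the conditions you state, $\lambda(B^*)=\lambda(\X)$ and $\psi^*(\X\setminus B^*)=\psi^*(\X)$, the set $B^*$ carries $\lambda$ (not $\psi^*$), and $\psi^*(B^*)=0$. Hence any $\nu\ll\lambda$ satisfies $\nu(\X\setminus B^*)=0$, so $\nu$ is concentrated \emph{on} $B^*$, and it is on $B^*$ (where $\psi^*$ vanishes) that $\mu$ and $\nu^*$ agree. Your sentences ``$\nu(B^*)=0$; so $\nu$ is concentrated on $\X\setminus B^*$, where $\mu$ and $\nu^*$ agree'' and the corresponding claims in the final paragraph are therefore inverted. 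Once you swap $B^*$ and its complement throughout, the argument is correct and coincides with the paper's proof.
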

\begin{proof}
Let $\nu\in M(\X)_+$ be any feasible solution of (\ref{conv-pb-gamma}). Let $B^*$ be a Borel set such that
$\lambda(B^*)=\lambda(\X)$ and $\psi^*(B^*)=0$. From  $\nu\leq\gamma \lambda$ we also have
$\nu\ll\lambda$ and the density $f_\gamma$ of $\nu$ is such that $0\leq f_\gamma\leq \gamma$ on $B^*$.
From $\nu\leq\mu$ we also deduce that
$\nu\leq\nu^*$ on $B^*$, that is (as both $\nu\ll\lambda$ and $\nu^*\ll\lambda$),
$f_\gamma\leq f^*$ a.e. on $\X$, and so $f_\gamma \leq \gamma\wedge f^*$ a.e. on $\X$.
But then $\nu(\X)\leq \int_\X (\gamma\wedge f^*)d\lambda=\nu^*_\gamma(\X)$. Finally, assume that there exists
another optimal solution $\nu'\ll\lambda$, hence with some density $f'_\gamma\in L_\infty(\X,\lambda)_+$ such that $\Vert f'_\gamma\Vert_\infty\leq\gamma$.
From the above argument, $f'_\gamma \leq \gamma\wedge f^*$ a.e. on $\X$ so that
\[0\,=\,\nu'(\X)-\nu^*_\gamma(\X)\,=\,\int_\X \underbrace{(f'_\gamma-(\gamma\wedge f^*))}_{\leq 0}\,d\lambda,\]
which implies that $f'_\gamma=(\gamma\wedge f^*)$, a.e. on $\X$. This in turn implies $\nu'=\nu^*_\gamma$, the desired result.
\end{proof}

\subsection{A hierarchy of semidefinite programs}

With $\X\subset\R^n$, let $\mu,\lambda$ be two Borel measures on $\X$ 
of which we know all moments 
\[\mu_\alpha\,=\,\int \x^\alpha\,d\mu;\quad \lambda_\alpha\,=\,\int \x^\alpha\,d\lambda,
\quad\alpha\in\N^n.\]

\begin{assumption}
\label{ass-1}
Both moment sequences $(\mu_\alpha)$ and $(\lambda_\alpha)$, $\alpha\in\N$,
satisfy Carleman's condition (\ref{carleman}).
\end{assumption}

Let $\gamma>0$ be fixed and for every $d\geq 1$, consider the following optimization problem:
\begin{equation}
\label{sdp-primal}
\begin{array}{ll}
\rho_d=\displaystyle\sup_{\y,\u,\v}&L_\y(1)\\
\mbox{s.t.}&y_\alpha+v_\alpha\,=\,\mu_\alpha,\quad\alpha\in\N^n_{2d}\\
&y_\alpha+u_\alpha\,=\,\gamma\,\lambda_\alpha,\quad\alpha\in\N^n_{2d}\\
&\M_d(\y),\M_d(\u),\M_d(\v)\,\succeq0.
\end{array}
\end{equation}
Equivalently, in matrix form:
\begin{equation}
\label{sdp-primal-matrix}
\begin{array}{ll}
\rho_d=\displaystyle\sup_{\y,\u,\v}&L_\y(1)\\
\mbox{s.t.}&\M_d(\y)+\M_d(\v)\,=\,\M_d(\mu)\\
&\M_d(\y)+\M_d(\u)\,=\,\gamma\,\M_d(\lambda)\\
&\M_d(\y),\M_d(\u),\M_d(\v)\,\succeq0.
\end{array}
\end{equation}
Problem (\ref{sdp-primal}) is a semidefinite program. It is straightforward to check that 
(\ref{sdp-primal}) is a relaxation of (\ref{conv-pb-gamma}) and so $\rho_d\geq \rho_\gamma$ for every $d\in\N$.
Its dual is also a semidefinite program
which reads:
\begin{equation}
\label{sdp-dual}
\begin{array}{ll}
\rho^*_d=\displaystyle\inf_{p,q,\sigma}&\displaystyle\int p\,d\mu+\displaystyle\gamma\,\int q\,d\lambda\\
\mbox{s.t.}&p+q-1=\sigma;\quad p,q,\sigma\in\Sigma[\x]_d.
\end{array}
\end{equation}
\begin{thm}
\label{th-main-sdp}
The semidefinite program (\ref{sdp-primal}) has an optimal solution $(\y^*,\u^*,\v^*)$
and there is no duality gap between (\ref{sdp-primal}) and its dual (\ref{sdp-dual}), i.e., $\rho_d=\rho^*_d$.
\end{thm}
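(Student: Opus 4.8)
The plan is to prove the two assertions separately: attainment of the supremum in (\ref{sdp-primal}), and the equality $\rho_d=\rho^*_d$ (the reverse inequality $\rho_d\le\rho^*_d$ being weak duality). The main new ingredient will be a strictly feasible (Slater) point for the dual (\ref{sdp-dual}); via a standard conic duality theorem this will yield the no-gap property and re-prove primal attainment in one stroke.

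For attainment, I would first check that the feasible set of (\ref{sdp-primal}) is nonempty and compact. It is nonempty: $\y=0$, $v_\alpha=\mu_\alpha$, $u_\alpha=\gamma\,\lambda_\alpha$ ($\alpha\in\N^n_{2d}$) is feasible, since $\M_d(\y)=0$, $\M_d(\v)=\M_d(\mu)$ and $\M_d(\u)=\gamma\,\M_d(\lambda)$ are all positive semidefinite because $\mu,\lambda\in M(\X)_+$. It is bounded: for any feasible triple, $\M_d(\y)\succeq0$ together with $\M_d(\v)=\M_d(\mu)-\M_d(\y)\succeq0$ forces $0\preceq\M_d(\y)\preceq\M_d(\mu)$, hence $0\le y_{2\alpha}\le\mu_{2\alpha}$ for every $\alpha\in\N^n_d$, and then positive semidefiniteness of the $2\times2$ minors of $\M_d(\y)$ gives $\vert y_{\alpha+\beta}\vert^2\le y_{2\alpha}\,y_{2\beta}\le\mu_{2\alpha}\,\mu_{2\beta}$ for all $\alpha,\beta\in\N^n_d$, so $\y$ ranges in a fixed compact set; the same bound applied through $0\preceq\M_d(\v)\preceq\M_d(\mu)$ and $0\preceq\M_d(\u)\preceq\gamma\,\M_d(\lambda)$ controls $\v$ and $\u$. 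Finally the feasible set is closed, being the intersection of the affine subspace defined by the linear equalities with the three closed cones $\{\M_d(\cdot)\succeq0\}$. As $L_\y(1)=y_0$ is linear and continuous, the supremum is attained at some $(\y^*,\u^*,\v^*)$.

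For the absence of a duality gap, I would exhibit a strictly feasible point of (\ref{sdp-dual}) and invoke conic strong duality. Recall that a polynomial which admits a positive definite Gram matrix in the basis $\v_d$ lies in the interior of the cone $\Sigma[\x]_d\subset\R[\x]_{2d}$. Put $\theta:=\sum_{\alpha\in\N^n_d}\x^{2\alpha}=\la\v_d(\x),\v_d(\x)\ra\in\Sigma[\x]_d$, whose Gram matrix is $\I_{s(d)}\succ0$; for any $\varepsilon>0$ set $p:=q:=\tfrac12(1+\varepsilon\,\theta)$ and $\sigma:=\varepsilon\,\theta$. Then $p+q-1=\varepsilon\,\theta=\sigma$, while $p$ and $q$ admit the Gram matrix $\tfrac12(\e_0\e_0^T+\varepsilon\,\I_{s(d)})$ and $\sigma$ the Gram matrix $\varepsilon\,\I_{s(d)}$, both positive definite ($\e_0$ being the canonical vector with $\la\e_0,\v_d(\x)\ra=1$). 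Hence $(p,q,\sigma)$ lies in $\mathrm{int}\,\Sigma[\x]_d\times\mathrm{int}\,\Sigma[\x]_d\times\mathrm{int}\,\Sigma[\x]_d$ and is feasible, so (\ref{sdp-dual}) satisfies Slater's condition. Moreover $\rho^*_d$ is finite: it is $\ge0$ since for any dual-feasible $(p,q,\sigma)$ the SOS polynomials $p,q$ are nonnegative on $\X$, whence $\int p\,d\mu+\gamma\int q\,d\lambda\ge0$; and it is $\le\int p\,d\mu+\gamma\int q\,d\lambda<\infty$ for the point just constructed, as $\mu$ and $\lambda$ have all moments finite. A standard strong duality theorem for semidefinite programs then applies to the primal--dual pair (\ref{sdp-primal})--(\ref{sdp-dual}): strict feasibility of (\ref{sdp-dual}) together with finiteness of $\rho^*_d$ gives $\rho_d=\rho^*_d$ and attainment of the supremum in (\ref{sdp-primal}) (which re-confirms the first assertion).

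The one step needing some care is the choice of the dual Slater point: one cannot take $p,q$ to be constants, since in degree $2d\ge 2$ a nonzero constant $c$ has only the rank-one Gram matrix $c\,\e_0\e_0^T$, which is never positive definite. Adding a small multiple of a polynomial that does carry a positive definite Gram matrix --- here $\theta$ --- remedies this, and at the same time makes $p$, $q$ and $\sigma=p+q-1$ all interior points of $\Sigma[\x]_d$. Everything else --- compactness of the primal feasible set, weak duality $\rho_d\le\rho^*_d$, and the appeal to conic strong duality --- is routine.
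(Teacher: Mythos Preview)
Your proof is correct. Both you and the paper establish primal attainment via compactness of the feasible set of (\ref{sdp-primal}), though the bounding steps differ: you control all entries of $\y,\v,\u$ by a direct $2\times2$ principal-minor argument from $0\preceq\M_d(\y)\preceq\M_d(\mu)$ (and the analogous sandwiches for $\v,\u$), whereas the paper bounds the pure even moments $L_\y(x_i^{2d}),L_\v(x_i^{2d}),L_\u(x_i^{2d})$ first and then cites the Lasserre--Netzer inequality to propagate these to all $\vert y_\alpha\vert,\vert v_\alpha\vert,\vert u_\alpha\vert$.

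For the absence of a duality gap the two arguments genuinely diverge. The paper never touches Slater: it observes that the (nonempty) set of optimal solutions of (\ref{sdp-primal}) is compact and invokes a result of Trnovsk\'a to conclude $\rho_d=\rho^*_d$. You instead exhibit an explicit strictly feasible triple $(p,q,\sigma)\in(\mathrm{int}\,\Sigma[\x]_d)^3$ for (\ref{sdp-dual}) and apply conic strong duality in the standard direction (Slater for the minimization problem forces attainment and zero gap on the maximization side). Your route is more self-contained --- it does not rely on the compact-optimal-set criterion, and it re-derives primal attainment as a by-product --- at the price of having to manufacture a positive definite Gram representation, which your perturbation by $\varepsilon\theta$ handles cleanly. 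The paper's route avoids that construction but leans on a less commonly quoted SDP duality result.
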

\begin{proof}
First observe that (\ref{sdp-primal}) has the trivial solution $\y=0$ and $(v_\alpha)=(\mu_\alpha)$, $(u_\alpha)=\gamma\,(\lambda_\alpha)$.
From the moment constraints we immediately have:
\[L_\y(x_i^{2d})\,\leq\,\int \x^{2d}_id\lambda;\quad
L_\v(x_i^{2d})\,\leq\,\int \x^{2d}_id\mu;\quad L_\u(x_i^{2d})\,\leq\,\gamma\,\int \x^{2d}_id\lambda,\]
for all $i=1,\ldots,n$. In addition $L_\y(1)\leq\mu_0$, $L_\v(1)\leq\mu_0$ and
$L_\u(1)\leq\gamma\lambda_0$. Let
\begin{equation}
\tau_1:=\max[\mu_0,\max_i[\int\x_i^{2d}d\mu]];\quad
\tau_2:=\gamma\,\max[\lambda_0,\max_i[\int\x_i^{2d}d\lambda]].\end{equation}
By Lasserre and Netzer \cite{lass-netzer}
it follows that 
\begin{equation}
\label{lassnetzer}
\sup_{\alpha\in\N^n_{2d}}\vert y_\alpha\vert\,\leq\,\tau_1;\quad
\sup_{\alpha\in\N^n_{2d}}\vert v_\alpha\vert\,\leq\,\tau_1;\quad
\sup_{\alpha\in\N^n_{2d}}\vert u_\alpha\vert\,\leq\,\tau_2.\end{equation}
Therefore the feasible set of (\ref{sdp-primal}) is compact and so
there exists an optimal solution $(\y^*,\u^*,\v^*)$. In addition, it also follows that
the set of optimal solutions of (\ref{sdp-primal}) is also compact. Therefore
there is no duality gap between (\ref{sdp-primal}) and its dual (\ref{sdp-dual}), that is,
$\rho_d=\rho^*_d$; see for instance Trnovsk\'a \cite{strong}.
\end{proof}

\begin{thm}
\label{th-main-convergence}
Let Assumption \ref{ass-1} hold.
For every $d\geq1$, let $(\y^d,\v^d,\u^d)$ be an arbitrary optimal solution
of (\ref{sdp-primal}) and by completing with zeros, consider 
$\y^d,\u^d$ and $\v^d$ as elements of $\R[\x]^*$.

Then the sequence of triplets $(\y^d,\v^d,\u^d)_{d\in\N}\subset(\R[\x]^*)^3$ converges 
to $(\y^*,\v^*,\u^*)\in (\R[\x]^*)^3$ as $d\to\infty$, that is, for every fixed $\alpha\in\N^n$:
\begin{equation}
\label{th-conv-1}
\lim_{d\to\infty}\,y^{d}_\alpha\,=\, y^*_\alpha;\:\quad\lim_{d\to\infty}\,v^{d}_\alpha\,=\,\mu_\alpha-y^*_\alpha;\quad
\lim_{d\to\infty}u^{d}_\alpha\,=\,\gamma\,\lambda_\alpha-y^*_\alpha.\end{equation}
Moreover, $\y^*$ is the vector of moments of the measure $\nu^*_\gamma\leq\mu$, unique optimal solution of (\ref{conv-pb-gamma}),
with density $f^*_\gamma=(\gamma\wedge f^*)\in L_\infty(\X,\lambda)$ and $\Vert f^*_\gamma\Vert_\infty\leq \gamma$. Similarly
$\v^*$ is the vector of moments of the  measure $\psi^*:=\mu-\nu^*_\gamma$. 
\end{thm}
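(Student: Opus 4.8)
The plan is to prove convergence of the moment triplets by a compactness-plus-uniqueness argument, then identify the limit via Theorem \ref{th-carleman} and Theorem \ref{th-abstract-gamma}. First I would fix an arbitrary $\alpha\in\N^n$ and observe that, by the uniform bounds \eqref{lassnetzer} (which hold with constants $\tau_1,\tau_2$ depending on $d$ but only through $\int\x_i^{2d}d\mu$, $\int\x_i^{2d}d\lambda$), for any fixed $\alpha$ the sequences $(y^d_\alpha)_d$, $(v^d_\alpha)_d$, $(u^d_\alpha)_d$ are eventually bounded; more carefully, since a feasible solution at level $d$ restricts to a feasible solution at level $d'<d$ after truncation, one gets for each fixed $\alpha$ a bound on $|y^d_\alpha|$ uniform in $d\geq|\alpha|$ (e.g.\ $|y^d_\alpha|\le \max[\mu_0,\int\x^{2\lceil|\alpha|/2\rceil}d\mu,\ldots]$ using that $\M_d(\y)\succeq0$ forces $|y_\alpha|$ to be controlled by diagonal entries $L_\y(\x^{2\beta})$ with $2\beta\le 2d$, and those are bounded by the corresponding moments of $\mu$). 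Hence by a diagonal extraction argument there is a subsequence $(d_k)$ along which $y^{d_k}_\alpha\to \bar y_\alpha$, $v^{d_k}_\alpha\to \bar v_\alpha$, $u^{d_k}_\alpha\to \bar u_\alpha$ for every $\alpha\in\N^n$ simultaneously.

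Next I would pass to the limit in the constraints. Along the subsequence, the affine relations $y_\alpha+v_\alpha=\mu_\alpha$ and $y_\alpha+u_\alpha=\gamma\lambda_\alpha$ survive, giving $\bar v_\alpha=\mu_\alpha-\bar y_\alpha$ and $\bar u_\alpha=\gamma\lambda_\alpha-\bar y_\alpha$ for all $\alpha$. The semidefiniteness $\M_d(\y^{d_k})\succeq0$ etc.\ is an entrywise-closed condition, so for every fixed $d$ we get $\M_d(\bar\y)\succeq0$, $\M_d(\bar\v)\succeq0$, $\M_d(\bar\u)\succeq0$; i.e.\ all three moment matrices of $\bar\y,\bar\v,\bar\u$ are PSD at every order. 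Now I invoke the Carleman argument: since $\bar v_\alpha=\mu_\alpha-\bar y_\alpha$ with $\M_d(\bar\y),\M_d(\bar\v)\succeq0$ and $\y+\v$ has moments $(\mu_\alpha)$ satisfying Carleman (Assumption \ref{ass-1}), the diagonal moments $L_{\bar\y}(\x_i^{2k})\le \mu_{(2k)e_i}$ and $L_{\bar\v}(\x_i^{2k})\le\mu_{(2k)e_i}$, so $\bar\y$ and $\bar\v$ each satisfy Carleman's condition too; hence by \cite[Proposition 3.5]{lass-book-icp} each has a (moment-determinate) representing measure $\bar\nu,\bar\psi\in M(\X)_+$ on $\X$ (one should note $\X$ need not be closed, but the representing measure lives on $\R^n$ and is supported where both $\mu$ and $\lambda$ are, so effectively on $\overline{\X}$; since in applications $\X$ carries $\mu,\lambda$ this is harmless, or one simply replaces $\X$ by its closure). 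Similarly $\bar\u$ has a representing measure. From $\bar\nu+\bar\psi$ having the same moments as $\mu$ and $\mu$ being determinate (Carleman), $\bar\nu+\bar\psi=\mu$; likewise $\bar\nu$ plus the measure represented by $\bar\u$ equals $\gamma\lambda$, so $\bar\nu\le\gamma\lambda$, and $\bar\nu\le\mu$. Thus $\bar\nu$ is feasible for \eqref{conv-pb-gamma}.

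Then I would close the loop on optimality. Feasibility of $\bar\nu$ gives $\bar\nu(\X)=L_{\bar\y}(1)\le\rho_\gamma$. Conversely $\rho_d\ge\rho_\gamma$ for all $d$ (the SDP is a relaxation, noted after \eqref{sdp-primal-matrix}), and along the subsequence $\rho_{d_k}=L_{\y^{d_k}}(1)\to L_{\bar\y}(1)=\bar\nu(\X)$, while $\rho_d$ is nonincreasing in $d$ (truncating a level-$d$ optimal solution is feasible at level $d'<d$ with the same objective, so $\rho_{d'}\ge\rho_d$), hence $\rho_d\downarrow$ to some limit $\ge\rho_\gamma$ and that limit equals $\bar\nu(\X)\le\rho_\gamma$; therefore $\bar\nu(\X)=\rho_\gamma$ and $\bar\nu$ is an optimal solution of \eqref{conv-pb-gamma}. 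By Theorem \ref{th-abstract-gamma} the optimal solution is unique, namely $\nu^*_\gamma$ with density $\gamma\wedge f^*$; so $\bar\nu=\nu^*_\gamma$ and, by determinacy, $\bar\y=\y^*$ is its moment vector, $\bar\v=\v^*$ is the moment vector of $\psi^*=\mu-\nu^*_\gamma$, and $\bar\u$ has moments $\gamma\lambda_\alpha-y^*_\alpha$. Finally, since the limit $(\y^*,\v^*,\u^*)$ is the \emph{same} for every convergent subsequence (uniqueness), a standard subsequence argument upgrades this to convergence of the full sequence $(\y^d,\v^d,\u^d)_{d\in\N}$, which is \eqref{th-conv-1}.

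The main obstacle I anticipate is the uniform-in-$d$ bound needed for the diagonal extraction: \eqref{lassnetzer} as stated has $\tau_1,\tau_2$ growing with $d$, so one must argue separately that, for each \emph{fixed} $\alpha$, $|y^d_\alpha|$ (and likewise for $\u,\v$) is bounded uniformly in $d$ — this follows from PSD-ness of the moment matrix restricted to a fixed finite order together with the level-$d$ feasibility forcing $L_{\y^d}(\x^{2\beta})\le\mu_{2\beta}$ for $|\beta|$ fixed, but it needs to be spelled out (e.g.\ $|y_{\alpha}|\le \tfrac12(L_\y(\x^{2\beta})+L_\y(\x^{2\beta'}))$ whenever $\alpha=\beta+\beta'$, a $2\times2$ minor inequality). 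A secondary delicate point is the passage "representing measure on $\X$" when $\X$ is merely Borel; this is handled by working on $\overline{\X}$ (or $\R^n$) and noting that determinacy forces the masses to agree, so the $\X$-versus-$\overline\X$ distinction does not affect moments.
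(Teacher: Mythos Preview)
Your proposal is correct and follows essentially the same strategy as the paper's proof: compactness of moment triplets, passage to the limit in the affine and PSD constraints, Carleman's condition inherited from $\mu$ and $\lambda$ to obtain representing measures, feasibility and optimality of the limit in \eqref{conv-pb-gamma}, and uniqueness (Theorem \ref{th-abstract-gamma}) to upgrade subsequential to full convergence. The only cosmetic difference is in the compactness step: the paper rescales each coordinate $y^d_\alpha$ by a factor $\tau_{1d'}$ (with $d'=\lceil|\alpha|/2\rceil$) so as to embed everything in the unit ball of $\ell_\infty$ and invoke Banach--Alaoglu, whereas you argue directly via the $2\times2$ minor inequality that each $(y^d_\alpha)_d$ is bounded uniformly in $d$ and then use diagonal extraction---these are equivalent, and your version is arguably more transparent since it makes explicit precisely the point you flag as the ``main obstacle''. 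Your additional observation that $\rho_d$ is nonincreasing in $d$ is correct and slightly streamlines the optimality step (the paper just uses $\rho_\gamma\le\rho_{d_k}$ directly).
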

\begin{proof}
Let $(\y^d,\v^d,\u^d)\in(\R[\x]^*)^3$, $d\in\N$, be as in Theorem \ref{th-main-convergence} and define
the triplet $(\hat{\y}^d,\hat{\v}^d,\hat{\u}^d)\in(\R[\x]^*)^3$, $d\in\N$, by:
\begin{eqnarray*}
\hat{y}^d_\alpha&=&y^d_\alpha/\tau_{1d},\quad\forall\,\alpha:\:2d-1\,\leq\vert\alpha\vert\,\leq\,2d\\
\hat{v}^d_\alpha&=&v^d_\alpha/\tau_{1d},\quad\forall\,\alpha:\:2d-1\,\leq\vert\alpha\vert\,\leq\,2d\\
\hat{u}^d_\alpha&=&u^d_\alpha/\tau_{2d},\quad\forall\,\alpha:\: 2d-1\,\leq\vert\alpha\vert\,\leq\,2d,\end{eqnarray*}
for all $d=1,2,\ldots$, 
where $\tau_{1d},\tau_{2d}$ are defined in (\ref{lassnetzer}) in the proof of Theorem \ref{th-main-sdp}. Therefore
\[\sup_{\alpha\in\N^n}\,\vert \hat{y}^d_\alpha\vert\,\leq\,1;\quad
\sup_{\alpha\in\N^n}\,\vert \hat{v}^d_\alpha\vert\,\leq\,1;\quad \sup_{\alpha\in\N^n}\,\vert \hat{u}^d_\alpha\vert\,\leq\,1,\]
and the sequence $\hat{\y}^d$, $d\in\N$, (as well as sequences $\hat{\v}^d$ and $\hat{\u}^d$, $d\in\N$)
is contained in the unit ball of $\ell_\infty$ (compact and sequentially compact for the weak-$*$ topology $\sigma(\ell_\infty,\ell_1)$). By Banach-Alaoglu
Theorem there exist a subsequence $(d_k)$ and sequences $\hat{\y}^*$, $\hat{\v}^*$ and $\hat{\u}^*$ (each in the unit ball of $\ell_\infty$) such that for each $\alpha\in\N^n$:
\[\lim_{k\to\infty}\,\hat{y}^{d_k}_\alpha \,=\,\hat{y}^*_\alpha;\quad
\lim_{k\to\infty}\,\hat{v}^{d_k}_\alpha \,=\,\hat{v}^*_\alpha;\quad \lim_{k\to\infty}\,\hat{u}^{d_k}_\alpha \,=\,\hat{u}^*_\alpha.\]
Therefore,
\begin{equation}
\label{aux77}
\lim_{k\to\infty}\,y^{d_k}_\alpha \,=\,y^*_\alpha;\quad
\lim_{k\to\infty}\,v^{d_k}_\alpha \,=\,v^*_\alpha;\quad \lim_{k\to\infty}\,u^{d_k}_\alpha \,=\,u^*_\alpha,\end{equation}
with:
\begin{eqnarray*}
y^*_\alpha&=&\hat{y}^*_\alpha\cdot\tau_{1d},\quad\forall\,\alpha:\:2d-1\,\leq\vert\alpha\vert\,\leq\,2d\\
v^*_\alpha&=&\hat{v}^*_\alpha\cdot\tau_{1d},\quad\forall\,\alpha:\:2d-1\,\leq\vert\alpha\vert\,\leq\,2d\\
u^*_\alpha&=&\hat{u}^*_\alpha\cdot\tau_{2d},\quad\forall\,\alpha:\: 2d-1\,\leq\vert\alpha\vert\,\leq\,2d,\end{eqnarray*}
for all $d=1,2,\ldots$.
Next, fix $d\in\N$, arbitrary. From (\ref{aux77}) 
\[0\,\preceq \M_d(\y^*)\preceq\,\M_d(\mu);\quad
0\,\preceq \M_d(\v^*)\preceq\,\M_d(\mu);\quad
0\,\preceq \M_d(\u^*)\preceq\gamma\M_d(\lambda).\]
In particular:
\begin{equation}
\label{aux44}
L_{\y^*}(x_i^{2k})\,\leq\,\int_\X x_i^{2k}\,d\mu;\quad
L_{\v^*}(x_i^{2k})\,\leq\,\int_\X x_i^{2k}\,d\mu;\quad
L_{\u^*}(x_i^{2k})\,\leq\,\gamma \int_\X x_i^{2k}\,d\lambda.\end{equation}
Recall that by Assumption \ref{ass-1} Carleman's condition (\ref{carleman}) holds for
$\mu$ and $\lambda$. Therefore (\ref{aux44}) implies that Carleman's condition also holds for $\y^*$, $\v^*$, and $\u^*$.
Next, as $\M_d(\y^*)\succeq0$, $\M_d(\v^*)\succeq0$ and $\M_d(\u^*)\succeq0$, 
then by  \cite[Proposition 3.5]{lass-book-icp}, $\y^*$, $\v^*$, and $\u^*$ are the respective moment sequences of finite Borel measures 
$\nu$, $\psi$, and $\phi$ on $\X$. In addition,  $\nu$, $\psi$, and $\phi$ 
are moment determinate and since (\ref{th-conv-1}) holds it follows that
\[\nu+\psi\,=\,\mu;\quad \nu+\phi\,=\,\gamma\,\lambda,\]
which shows that $\nu$ is a feasible solution of (\ref{conv-pb-gamma}).
We also have
\[\rho_\gamma\,\leq\,\lim_{k\to\infty} \rho_{d_k}\,=\,\lim_{k\to\infty} L_{\y^{d_k}}(1)\,=\,L_{\y^{*}}(1)\,=\,\nu(\X),\]
which proves that $\nu$ is an optimal solution of (\ref{conv-pb-gamma}). But by Theorem \ref{th-abstract-gamma},
the optimal solution of (\ref{conv-pb-gamma}) is unique. Therefore all accumulation points of
$(\y^{d},\v^{d},\u^{d})$, $d\in\N$, are identical since they are the moment sequences of $\nu^*_\gamma$, $\mu-\nu^*_\gamma$ and
$\gamma\lambda-\nu^*_\gamma$, respectively, that is, (\ref{th-conv-1}) holds.
\end{proof}
The meaning of Theorem \ref{th-main-convergence} is as follows: 
Recall that $\nu^*+\psi^*$ (with $\nu^*\ll\lambda$ and $\psi^*\perp\lambda$) is the (unique) Lebesgue decomposition of $\mu$. Then:

- Either $\nu^*$ has a density $f\in L_\infty(\X,\lambda)_+$ with $\Vert f\Vert_\infty\leq \gamma$
in which case in the limit one obtains all moments of $\nu^*$ and $\psi^*$, or 

- $\nu^*$ does not have a density 
$f^*\in L_\infty(\X,\lambda)_+$ with $\Vert f^*\Vert_\infty\leq \gamma$. In this case, 
$\mu$ can be decomposed into a sum $\nu_1+\nu_2$ where $\nu_1$ has a density $\gamma\wedge f^*\in L_\infty(\X,\lambda)_+$
and $\nu_2=\mu-\nu_1\in M(\X)_+$. In the limit one obtains  all moments of $\nu_1$ and $\nu_2$ (but $\nu_2\not\perp\lambda$, i.e. $\nu_2$ is not singular w.r.t. $\lambda$).

\subsection{Recovering the singular part}

A case of particular interest is when the singular part $\psi^*$ $(\perp\lambda)$ of the Lebesgue decomposition
$\nu^*+\psi^*$ of $\mu$ w.r.t. $\lambda$, is supported on finitely many points. 
Assume that
$\nu^*$ has a density in $L_\infty(\X,\lambda)_+$ with $\Vert f^*\Vert_\infty\leq\gamma$. Then
by Theorem \ref{th-main-convergence}, 
\begin{equation}
\label{singular-part}
\lim_{d\to\infty}\,v^{d}_\alpha\,=\,\mu_\alpha-y^*_\alpha\,=\,\int_\X \x^\alpha\,d\psi^*,\quad\forall\,\alpha\in\N^n,\end{equation}
where $(\y^d,\v^d,\u^d)$ is an optimal solution  of the semidefinite program (\ref{sdp-primal}).

Next, if $\psi^*$ has a finite support, say $m$ points $\x_1,\ldots,\x_m\in \X$, its moment matrix $\M_d(\v^*)$ has (finite) rank $m$
for all $d\geq d_0$, for some $d_0$. 
In particular, ${\rm rank}\,\M_{d_0+1}(\v^*)={\rm rank}\,\M_{d_0}(\v^*)=m$.
By Curto \& Fialkow \cite{curto1,curto2} this property is indeed a certificate that 
$(v^*_\alpha)$, $\alpha\in\N^n_{d_0+1}$ is the truncated moment sequence of a measure supported 
on $m={\rm rank}\,\M_{d_0}(\v^*)$ points of $\R^n$.
There is even a linear algebra procedure to extract the $m$ points $\x_1,\ldots,\x_m$
from the sole knowledge of the finitely many moments $(v^*_\alpha)$, $\vert\alpha\vert\leq d_0+1$; see e.g. Henrion and Lasserre \cite{jbl-henrion}.

\begin{prop}
\label{eigenvalue}
Let $\{\x_1,\ldots,\x_m\}\subset \X$ be the support of the singular part $\psi^*$ in the Lebesgue decomposition
$\nu^*+\psi^*$ of $\mu$ w.r.t. $\lambda$. Let $d_0$ be such that ${\rm rank}\,\M_d(\v^*)=m$ for all $d\geq d_0$,
and let $\eta>0$ be the smallest strictly positive eigenvalue of $\M_{d_0}(\v^*)$. Let
$\v^d$ be part of an optimal solution of (\ref{sdp-primal}).

Then for every fixed $\epsilon>0$, there exists $d_\epsilon>d_0$  such that the 
first respective $s(d_0)-m$ and
$s(d_0+1)-m$  eigenvalues (arranged in increasing order) of $\M_{d_0}(\v^d)$ and $\M_{d_0+1}(\v^d)$
are less than $\epsilon$ and their last respective $m$ eigenvalues 
are larger than $\eta/2$.
\end{prop}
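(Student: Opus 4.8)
The plan is to derive the statement from two facts already at hand: the coordinatewise convergence $v^d_\alpha\to v^*_\alpha$ of Theorem~\ref{th-main-convergence}, and the $1$-Lipschitz dependence (in spectral norm, denoted $\Vert\cdot\Vert$) of the ordered eigenvalues of a real symmetric matrix on that matrix, i.e.\ Weyl's perturbation inequality. First I would describe the limit matrices. By Theorem~\ref{th-main-convergence} (see also \eqref{singular-part}), $\v^*$ is the moment sequence of $\psi^*$, which by hypothesis is the $m$-atomic measure supported on $\x_1,\dots,\x_m$; hence $\M_{d_0}(\v^*)$ and $\M_{d_0+1}(\v^*)$ are moment matrices of $\psi^*$, so they are positive semidefinite, and by hypothesis have rank exactly $m$. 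Thus the $s(d_0)-m$ smallest eigenvalues of $\M_{d_0}(\v^*)$ are $0$, while its $m$ largest eigenvalues are all $\geq\eta$, the smallest strictly positive one being $\eta$ by definition.

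Next I would transfer this spectral gap to level $d_0+1$: since $\M_{d_0}(\v^*)$ is a principal submatrix of $\M_{d_0+1}(\v^*)$, Cauchy's interlacing theorem gives that the $j$-th largest eigenvalue of $\M_{d_0+1}(\v^*)$ dominates the $j$-th largest eigenvalue of $\M_{d_0}(\v^*)$ for $1\leq j\leq m$; hence the $m$ largest eigenvalues of $\M_{d_0+1}(\v^*)$ are $\geq\eta$ and, since $\mathrm{rank}\,\M_{d_0+1}(\v^*)=m$, its remaining $s(d_0+1)-m$ eigenvalues vanish. Then I would observe that $\M_{d_0}(\v^d)\to\M_{d_0}(\v^*)$ and $\M_{d_0+1}(\v^d)\to\M_{d_0+1}(\v^*)$ in the norm $\Vert\cdot\Vert$ as $d\to\infty$: these are matrices of the fixed sizes $s(d_0)$ and $s(d_0+1)$ whose entries are among the coordinates $v^d_\alpha$, $\vert\alpha\vert\leq 2d_0+2$, each of which converges to $v^*_\alpha$ by Theorem~\ref{th-main-convergence}; moreover, for $d\geq d_0+1$ both matrices are positive semidefinite, being principal submatrices of $\M_d(\v^d)\succeq0$.

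To conclude, fix $\epsilon>0$, set $\delta:=\min(\epsilon,\eta/2)>0$, and use the convergence just noted to pick $d_\epsilon>d_0$ such that $\Vert\M_{d_0}(\v^d)-\M_{d_0}(\v^*)\Vert<\delta$ and $\Vert\M_{d_0+1}(\v^d)-\M_{d_0+1}(\v^*)\Vert<\delta$ for all $d\geq d_\epsilon$. By Weyl's inequality, each ordered eigenvalue of $\M_{d_0}(\v^d)$ (resp.\ $\M_{d_0+1}(\v^d)$) then differs from the corresponding ordered eigenvalue of $\M_{d_0}(\v^*)$ (resp.\ $\M_{d_0+1}(\v^*)$) by less than $\delta$. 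Together with the two steps above, this gives for $d\geq d_\epsilon$: the $s(d_0)-m$ smallest eigenvalues of $\M_{d_0}(\v^d)$ lie in $[0,\delta)\subseteq[0,\epsilon)$ (the lower bound by positive semidefiniteness), while its $m$ largest eigenvalues exceed $\eta-\delta$, hence are $>\eta/2$; the identical computation for $\M_{d_0+1}(\v^d)$, using the gap from the interlacing step, yields the remaining two bounds.

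I do not expect a serious obstacle: the whole argument is a continuity statement. The only point calling for an actual idea rather than routine bookkeeping is the interlacing step, i.e.\ making sure that the \emph{quantitative} gap $\eta$, which is read off $\M_{d_0}(\v^*)$, still separates the $m$ nonzero eigenvalues of $\M_{d_0+1}(\v^*)$ from the zero ones — and Cauchy's interlacing theorem supplies exactly this.
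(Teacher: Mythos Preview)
Your proposal is correct and follows essentially the same approach as the paper: both argue by continuity of the ordered eigenvalues of a fixed-size symmetric matrix in its entries, combined with the entrywise convergence $v^d_\alpha\to v^*_\alpha$ from Theorem~\ref{th-main-convergence} (equivalently \eqref{singular-part}). Your version is in fact more careful than the paper's: the paper simply invokes continuity, whereas you explicitly justify via Cauchy interlacing that the $m$ nonzero eigenvalues of $\M_{d_0+1}(\v^*)$ are also bounded below by $\eta$ (the statement defines $\eta$ only in terms of $\M_{d_0}(\v^*)$), a point the paper's proof leaves implicit.
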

\begin{proof}
Recall that $s_d={n+d\choose n}$ is the size of the moment matrix $\M_d(\v^*)$. 
Let $\eta$ be the smallest strictly positive eigenvalue of $\M_{d_0}(\v^*)$.
The eigenvalues of 
$\M_{d_0}(\cdot)$ and $\M_{d_0+1}(\cdot)$ (arranged in increasing order)
are continuous functions of the entries  and (\ref{singular-part}) holds. So by (\ref{singular-part}), given $\epsilon>0$ there exists $d_\epsilon>0$ such that for every $d\geq d_\epsilon$, the moment matrices $\M_{d_0}(\y^d)$  and $\M_{d_0+1}(\y^d)$ 
have $m$ strictly positive eigenvalues with value larger than $\eta/2$ while their other respective 
 $s_{d_0}-m$ and $s_{d_0+1}-m$ eigenvalues have value smaller than $\epsilon$.
 \end{proof}
 
\subsection*{A practical procedure} So one may propose the following numerical procedure with an {\it \`a priori} fixed integer $p>0$.

\begin{itemize}
\item A threshold $10^{-p}$ is proposed to 
``declare" zero an eigenvalue of $\M_d(\v^d)$ as follows. Compute the eigenvalues of $\M_d(\v^d)$ and check whether
they can be grouped into two disjoint sets $A$ and $B$ such that
\[\sigma \in B \Rightarrow \frac{\sigma}{\theta_A} <10^{-p},\quad \mbox{with }\theta_A= \arg\min\{\sigma: \sigma\in A\}.\]
In view of (\ref{singular-part}) this eventually happens when $d$ is sufficiently large and with $\# A=m$. (However it may happen earlier and with $\# A\neq m$.)
So once one has found such sets $A$ and $B$ then one considers that the rank of $\M_d(\y^d)$ is $\# A$.
\item Once an optimal solution $(\y^d,\v^d,\u^d )$ has been computed, check whether
there is some $k\leq d-1$ such that ${\rm rank}\,\M_k(\v^d)={\rm rank}\,\M_{k+1}(\v^d)$
where ``rank" has the above numerical meaning.
\item If the above rank-condition holds one considers that $\M_{k}(\v^d)$
is the truncated moment sequence of a measure supported on $t:={\rm rank}\,\M_k(\v^d)$ points of $\R^n$.
The extraction procedure in Henrion and Lasserre \cite{jbl-henrion} can be applied and yields $t$ points
$\x_1,\ldots,\x_t$.
\end{itemize}
Of course the rank condition ${\rm rank}\,\M_k(\v^d)={\rm rank}\,\M_{k+1}(\v^d)$ (with $k\leq d-1$) can happen 
earlier than when $d\geq d_0$ (recall that $d_0$ is not known in advance). In this case there is no guarantee that the 
extracted points are indeed the support of $\psi^*$.
\subsection{Examples}
Given two measures $\mu,\lambda$ on $\X\subset\R^n$ and their respective moment sequences
$(\mu_\alpha)$ and $(\lambda_\alpha)$, $\alpha\in\N^n$, with no loss of generality we may and will assume that
$\mu$ is a probability measure (otherwise replace $\mu_\alpha$ with $\mu_\alpha/\mu_0$ for all $\alpha\in\N^n$).

\begin{ex}
\label{ex1}
{\rm The first example is one-dimensional with one atom for the singular part.  Let $\X=[0,1]$, $a,b,c\in \X$, $a<b$, and let $\lambda$ 
be the Lebesgue measure on $\X$. Let 
$\nu_{ab}$ be the probability measure distributed uniformly on $[a,b]\subset \X$ and
\[\mu\,=\,\underbrace{p\,\nu_{ab}}_{\nu^*}+\underbrace{(1-p)\,\delta_c}_{\psi^*},\]
where $\delta_c$ is the Dirac measure at the point $c$ and $p\in (0,1)$ is some fixed scalar.
With $a=0.1$, $b=0.7$, $c=0.4$, and $\gamma=2p$, one solves (\ref{sdp-primal}) with $d=9$ (hence overall we look at moments up to order $18$),
to obtain an optimal solution $(\y^d,\v^d,\u^d)$. The first $5$ (normalized) moments of $\psi^*$ read
\[\left[\begin{array}{ccccc}1.00000 &  0.40000&   0.16000&   0.06400 &  0.02560\end{array}\right].\]
while the first $5$ (normalized) moments of $\nu^*$ read
\[\left[\begin{array}{ccccc}1.00000 &  0.40000&   0.19000&   0.10000 &  0.05602\end{array}\right].\]
In Table \ref{table-1} are displayed the first $5$ ``moments" $v^d_k/v^d_0$, $k=0,\ldots4$, computed in (\ref{sdp-primal})
and the resulting relative errors with those of $\psi^*$, for different values of $p\in (0,1)$.
Similarly in Table \ref{table-11} are displayed the first $5$ ``moments" $y^d_k/y^d_0$, $k=0,\ldots4$, computed in (\ref{sdp-primal})
and the resulting relative errors with those of $\nu^*$ (normalized).
As one may expect, the quality of the approximation is very good for small $p$ and the slightly deteriorates when $p$ increases.

Recall that by Theorem \ref{th-main-sdp} the optimal value $\rho_d$ of (\ref{sdp-primal}) is such that
$\rho_d\to\rho_\gamma$ as $d\to\infty$. However, it is worth noting that $\rho_d$
is not very close to $\rho_\gamma=p$ when $d\leq 10$. So it seems that
the semidefinite hierarchy (\ref{sdp-primal}), $d\in\N$, succeeds well in identifying relatively fast
the support of $\psi^*$ and $\nu^*$, but not so well to obtain their respective masses $p$ and $1-p$.
\begin{table}[!h]
\begin{center}
\begin{tabular}{||c|c|c|c|c||}
\hline
\multicolumn{5}{||c||}{p=0.1}\\
\hline
1.00000&0.3998&   0.1601&   0.0642&   0.0258\\
0\%& 0.05\%&   0.06\%&   0.31\%&   0.76\%\\
\hline
\multicolumn{5}{||c||}{p=0.2}\\
\hline
1.00000& 0.39936 &   0.16009&  0.06435 &   0.02597\\
0\% & 0.15\%&   0.05\% &   0.55\% &  1.45\%\\
\hline
\multicolumn{5}{||c||}{p=0.3}\\
\hline
 1.00000& 0.39861&   0.15982&   0.06434&   0.02606\\
0\% & 0.34\% &0.11\% &   0.54\% &   1.79\%\\
\hline
\multicolumn{5}{||c||}{p=0.4}\\
\hline
1.00000& 0.39785&  0.15979&   0.06462&   0.02639\\
0\% &0.53\%  &0.12\% &   0.96\%&   2.9\%\\
\hline
\multicolumn{5}{||c||}{p=0.5}\\
\hline
1.00000& 0.39662&  0.15956&   0.06484&   0.02672\\
0\% &0.85\%&  0.27\% &   1.29\%&   4.2\%\\
\hline
\multicolumn{5}{||c||}{p=0.6}\\
\hline
1.00000&0.39481&   0.15937&   0.06534&   0.02736\\
0\% & 1.31\% &  0.39\%&   2.05\%&   6.4\%\\
 \hline
\end{tabular}
\end{center}
\caption{Example \ref{ex1}: First $5$ approximate moments of $\psi^*$ (normalized) \label{table-1}}
\end{table}
\begin{table}[!h]
\begin{center}
\begin{tabular}{||c|c|c|c|c||}
\hline
\multicolumn{5}{||c||}{p=0.1}\\
\hline
1.00000&0.4018&   0.1872&   0.0959&   0.05241\\
0\%& 0.45\%&   1.5\%&   4.2\%&   6.8\%\\
\hline
\multicolumn{5}{||c||}{p=0.2}\\
\hline
1.00000& 0.40232 &  0.18775&   0.09640&   0.05269\\
0\%& 0.58\% & 1.2\%&  3.7\% &  6.3\%\\
\hline
\multicolumn{5}{||c||}{p=0.3}\\
\hline
1.00000& 0.40294 &  0.18849&   0.09699&   0.05311\\
0\% & 0.73\% &  0.79\% &  3.09\% &  5.46\%\\
\hline
\multicolumn{5}{||c||}{p=0.4}\\
\hline
1.00000& 0.40288&  0.18837&  0.09688&   0.05303\\
0\% & 0.71\% & 0.86\% & 3.21\% & 5.62\%\\
\hline
\multicolumn{5}{||c||}{p=0.5}\\
\hline
 1.00000& 0.40294&  0.18848&   0.09698&   0.05311\\
0\% & 0.73\% &0.8\% &  3.10\% & 5.47\%\\
\hline
\multicolumn{5}{||c||}{p=0.6}\\
\hline
1.00000& 0.40291&   0.18846&  0.09698&   0.05311\\
 0\% & 0.72\%&  0.81\% & 3.11\%&  5.46\%\\
 \hline
\end{tabular}
\end{center}
\caption{Example \ref{ex1}: First $5$ approximate moments of $\nu^*$ (normalized) \label{table-11}}
\end{table}

}\end{ex}

\begin{ex}
\label{ex2}
{\rm The second  example is also one-dimensional but with two atoms for the singular part.  Let $0<p<1$, $\X=[0,1]$, $a,b,c_1,c_2\in \X$, $a<b$, and let $\lambda$ 
be the Lebesgue measure on $\X$. Let 
$\nu_{ab}$ be the probability measure distributed uniformly on $[a,b]\subset \X$ and
\[\mu\,=\,\underbrace{p\,\nu_{ab}}_{\nu^*}+\underbrace{\frac{(1-p)}{2}\,(\delta_{c_1}+\delta_{c_2})}_{\psi^*}.\]
With $a=0.1$, $b=0.7$, $c_1=0.4$, $c_2=0.5$, and $\gamma=2p$, one solves (\ref{sdp-primal}) with $d=9$ 
(hence overall we look at moments up to order $18$).
The first 5 moments of the measure $(\delta_{c_1}+\delta_{c_2})/2$ read
\[\left[\begin{array}{ccccc}1.00000 &  0.45000&   0.20500&   0.09450 &  0.04405\end{array}\right].\]
In Table \ref{2-atoms-1} one displays the first $5$ approximate ``moments" $(v^d_k/v^d_0)$, $k=0,\ldots,4$, 
and the resulting relative errors with those of $\psi^*$, for various values of $p\in (0,1)$. 
Similarly, in Table \ref{2-atoms-2} one displays the first $5$ approximate ``moments" $(y^d_k/y^d_0)$, $k=0,\ldots,4$, 
and the resulting relative errors with those of $\nu^*$.
Again one observes that the support of $\psi^*$ is relatively well recovered with few moments ($d\leq 10$). However, and
as in  Example \ref{ex1}, the optimal value $\rho_d$ is not very close to $\rho_\gamma$ when $d\leq 10$.
\begin{table}[!h]
\begin{center}
\begin{tabular}{||c|c|c|c|c||}
\hline
\multicolumn{5}{||c||}{p=0.1}\\
\hline
1.00000 &  0.44952&   0.20435&   0.09390&   0.04359\\
0\% &0.11\% &  0.31\% &   0.62\%&   1.04\%\\
\hline
\multicolumn{5}{||c||}{p=0.2}\\
\hline
1.00000&0.44934&   0.20404&   0.09358&  0.04332\\
0\%&0.14\%&   0.46\% &  0.96\%  & 1.63\%\\
\hline
\multicolumn{5}{||c||}{p=0.3}\\
\hline
1.00000&0.44910&   0.20354&   0.09305&   0.04288\\
0\%&0.19\%&   0.71\%   &1.53\%&   2.64\%\\
\multicolumn{5}{||c||}{p=0.4}\\
\hline
1.00000&0.44896&   0.20305&   0.09247&  0.04239\\
0\%& 0.23\%&   0.94\%&   2.14\%&   3.76\%\\
\hline
\multicolumn{5}{||c||}{p=0.5}\\
\hline
1.00000& 0.44894&   0.20250&   0.091741&   0.04173\\
0\%&0.23\%&   1.2\%&   2.91\% &  5.24\%\\
\hline
\multicolumn{5}{||c||}{p=0.6}\\
\hline
1.00000&0.44916&   0.20175&   0.09063&  0.04071\\
0\%&0.18\%  & 1.58\% &  4.09\% &   7.5\%\\
\hline
\end{tabular}
\end{center}
\caption{Example \ref{ex2}: First $5$ approximate ``moments" of $\psi^*$ (normalized) \label{2-atoms-1}}
\end{table}
\begin{table}[!h]
\begin{center}
\begin{tabular}{||c|c|c|c|c||}
\hline
\multicolumn{5}{||c||}{p=0.1}\\
\hline
1.00000& 0.41114&   0.19720&   0.10378&   0.05781\\
0\% & 2.78\% &  3.79\% &3.78\% & 3.20\% \\
\hline
\multicolumn{5}{||c||}{p=0.2}\\
\hline
1.00000& 0.40892&   0.19520&   0.10230&   0.05680\\
0\%& 2.23\% &  2.74\% &2.30\% &1.39\%\\
\hline
\multicolumn{5}{||c||}{p=0.3}\\
\hline
1.00000& 0.40844&  0.19477&  0.10198&  0.05659\\
0\% & 2.11\% &2.51\% &1.98\% &1.02\%\\
\multicolumn{5}{||c||}{p=0.4}\\
\hline
 1.00000& 0.40789&   0.19427&   0.10162&0.05635\\
0\%& 1.97\% &2.24\% &  1.62\% &0.59\%\\
\hline
\multicolumn{5}{||c||}{p=0.5}\\
\hline
1.00000& 0.40742&  0.19384&  0.10131&0.05614\\
0\% & 1.85\%&2.02\% &1.31\% &0.2\%\\
\hline
\multicolumn{5}{||c||}{p=0.6}\\
\hline
1.00000& 0.40693&  0.19342&  0.10101&   0.05594\\
0\%& 1.73\%&1.8\%&1.01\%&  0.13\%\\
\hline
\end{tabular}
\end{center}
\caption{Example \ref{ex2}: First $5$ approximate ``moments" of $\nu^*$ (normalized) \label{2-atoms-2}}
\end{table}

}\end{ex}
\begin{ex}
\label{ex3}
{\rm The third  example is two-dimensional with the singular part $\psi^*$ being uniformly supported on
the point $\x=(1,2)$. Then with $p\in (0,1)$,
\[\mu\,=\,p\,\nu^* +\underbrace{(1-p)\,\delta_{(1,2)}}_{\psi^*};\qquad \nu^*\ll\lambda,\]
where $\nu^*$ is the the (normalized) Gaussian measure with density $\exp(-x_1^2-x_2^2)$, $\lambda=2\nu^*$
and $\gamma=2p$.
With $d=9$, results are displayed in Table \ref{1-atom-normal}  for different values of the weight $p\in (0,1)$.
The first line displays 
the maximum relative error between the computed moment vector $\v^d/v^d_0$ and the moments
of $\psi^*=\delta_{(1,2)}$ up to order $4$. The second line displays the maximum relative error 
between the (normalized) second order moments $\int x_1^2d\nu^*$ and $\int x_2^2d\nu^*$ and their approximation
in the vector $\y^d/y^d_0$ (the first order moments and $\int x_1x_2d\nu^*$ vanish while their approximation in $\y^d/y^d_0$ are less than $0.008$). The third line displays $\rho_d$ that ideally should be close to $p$.

\begin{table}[!h]
\begin{center}
\begin{tabular}{||c|c|c|c|c|c|c|c|c||}
\hline
& p=0.1 &p=0.2&p=0.3&p=0.4&p=0.5&p=0.6 &p=0.7&p=0.8\\
\hline
\hline
$\psi^*$ &0.02\%&    0.05\%   & 0.08\%  &  0.11\%  &  0.22\% &   0.30\%   & 0.41\%   & 0.63\%\\
$\nu^*$ &0.02\%&    0.05\%   & 0.03\%  &  0.04\%  &  0.09\% &   0.04\%   & 0.01\%   & 0.06\%\\
$\rho_9$ &0.1005&    0.2010  &  0.3014  &  0.4019 &   0.5026 &   0.6028  &  0.7033 &   0.8035\\ 
\hline
\end{tabular}
\end{center}
\caption{Example \ref{ex3}: Relative error on the ``moments" up to order $4$ of $\psi^*$ and $\nu^*$ (normalized)\label{1-atom-normal}}
\end{table}
}
\end{ex}
\begin{ex}
\label{ex4}
{\rm The fourth  example is two-dimensional with the singular part $\psi^*$ being uniformly supported on
the two points $(1,2)$ and $(-2,1)$. Then with $p\in (0,1)$,
\[\mu\,=\,p\,\nu^* +\underbrace{(1-p)\,(\delta_{(1,2)}+\delta_{(-2,1)})/2}_{\psi^*};\qquad \nu^*\ll\lambda,\]
where $\nu^*$ is the the (normalized) Gaussian measure with density $\exp(-x_1^2-x_2^2)$, $\lambda=\nu^*$
and $\gamma=2p$.
With $d=9$, results are displayed in Table \ref{2-atom-normal}  for different values of the weight $p\in (0,1)$.
The first line displays 
the maximum relative error between the computed moment vector $\v^d/v^d_0$ and the moments
of $\psi^*=\delta_{(1,2)}$ up to order $4$. The second line displays the maximum relative error 
between the (normalized) second order moments $\int x_1^2d\nu^*$ and $\int x_2^2d\nu^*$ and their approximation
in the vector $\y^d/y^d_0$ (the first order moments and $\int x_1x_2d\nu^*$ vanish while their approximation in $\y^d/y^d_0$ are less than $0.011$). Again the third line displays $\rho_d$ that ideally should be close to $p$.

\begin{table}[!h]
\begin{center}
\begin{tabular}{||c|c|c|c|c|c|c|c|c||}
\hline
& p=0.1 &p=0.2& p=0.3&p=0.4&p=0.5&p=0.6 &p=0.7&p=0.8\\
\hline
\hline
$\psi^*$&0.03\%& 0.06\% &0.08\%& 0.14\% &  0.20\% & 0.26\% &0.44\% &0.71\%\\
$\nu^*$&1.45\%& 1.92\% &2.15\%& 2.11\% &  2.14\% & 2.24\% &2.21\% &2.23\%\\
$\rho_9$& 0.1012 &   0.2019 &   0.3028 &   0.4035 &   0.5040 &   0.6047 &   0.7056 &   0.8062\\
\hline
\end{tabular}
\end{center}
\caption{Example \ref{ex4}: maximum relative error on the ``moments" up to order $4$ of $\psi^*$  and $\nu^*$ (normalized)\label{2-atom-normal}}
\end{table}
}
\end{ex}
\begin{ex}
\label{ex5}
{\rm The fifth  example is two-dimensional with the singular part $\psi^*$ being uniformly supported on
the unit circle $\{\x\in\R^2: x_1^2+x_2^2=1\}$.  Then with $p\in (0,1)$,
\[\mu\,=\,p\,\nu^* +(1-p)\,\psi^*;\qquad \nu^*\ll\lambda,\]
where $\nu^*$ is the normalized Gaussian measure with density $\exp(-x_1^2-x_2^2)$, $\lambda=\nu^*$
and $\gamma=2p$.
With $d=7$ Table \ref{circle-normal} displays the relative error between the moments 
$\int x_1^2d\psi^*$, $\int x_1^4d\psi^*$ and $\int x_1^2x_2^2d\psi^*$ (the odd moments being zero)
and their respective approximation in $\v^d/v^d_0$,
for the value of $p=0.1$, $0.2$, $0.3$ and $0.4$. 
For larger values of $d$ the semidefinite solver encounters numerical difficulties and the numerical output 
cannot be trusted. The last column also displays $L_{\v^d/v^d_0}((x_1^2+x_2^2-1)^2)$ which ideally should
be $\int (x_1^2+x_2^2-1)^2)d\psi^*/\psi^*(\R^2)=0$ since for every $\alpha\in\N^n$, $v^d_\alpha\to \int\x^\alpha d\psi^*$ as $d\to\infty$; recall (\ref{singular-part}) when $\v^d$ is a moment sequence.

Concerning the approximation of the moments of $\nu^*$:
The relative error between the second order moment $\int x_1^2d\nu^*$ (normalized) 
and its approximation (in the computed vector of moments $\y^d/y^d_0$) 
is less than $1\%$ for all $p=0,1$, $0.2$, $0.3$ and $0.4$. On the other hand,
for the order-4 moments $\int x_1^4d\nu^*$ and $\int x_1^2x_2^2d\nu^*$ (normalized) the relative error is about $20\%$.

In Table \ref{circle-box} the same results are displayed but this time when $\nu^*$ is 
uniformly distributed on the unit box $[-1,1]^2$, $\lambda=\nu^*$ and $\gamma=2p$.
In this case the relative error between the second order moment $\int x_1^2d\nu^*$ (normalized) 
and its approximation (in the computed vector of moments $\y^d/y^d_0$) 
is about $11\%$ for all $p=0,1$, $0.2$, $0.3$ and $0.4$.
For the order-4 moment $\int x_1^4d\nu^*$ (normalized) it is about $13\%$ in all cases, and
for the order-4 moment $\int x_1^2x_2^2d\nu^*$ (normalized) it is about $8\%$ in all cases.

So in both Example \ref{ex4} and Example \ref{ex5} the singular part $\psi^*$ (normalized) is 
well recovered even though the absolutely continuous part $\nu^*$ is not so well recovered  
(in particular the moments of order $4$ are not very accurate).

\begin{table}[!h]
\begin{center}
\begin{tabular}{||c|c|c|c||c||}
\hline
& $x_1^2$ & $x_1^4$ & $x_1^2x_2^2$&$L_{\v^d/v^d_0}((x_1^2+x_2^2-1)^2)$\\
\hline
\hline
p=0.1 & 0.19\% &   0.52\% &   0.53\% & 0.001\\
 p=0.2 & 0.47\% &   1.28\% &   1.28\% & 0.003\\
p=0.3&    0.94\% &   2.76\%&    2.76\% & 0.009\\
 p=0.4&  1.87\% &   5.93\%&   5.93\% & 0.02\\
\hline
\end{tabular}
\end{center}
\caption{Example \ref{ex5}: $d=7$; $\nu^*$ Gaussian; relative error on the ``moments" 
$\int x_1^2d\psi^*$, $\int x_1^4d\psi^*$ and $\int x_1^2x_2^2d\psi^*$ of $\psi^*$ 
(normalized)\label{circle-normal}}
\end{table}
\begin{table}[!h]
\begin{center}
\begin{tabular}{||c|c|c|c||c||}
\hline
& $x_1^2$ & $x_1^4$ & $x_1^2x_2^2$&$L_{\v^d}((x_1^2+x_2^2-1)^2)$\\
\hline
\hline 
 p=0.1 & 0.26\% &    0.93\% & 0.61\% &0.002\\
p=0.2&    0.62\% &   2.22\%  &1.47\% &0.0004\\
 p=0.3&  1.15\%&    4.09\% &2.76\%  &0.0008\\
 p=0.4 &   1.87\% &   6.97\% & 5.27\% & 0.0016\\
\hline
\end{tabular}
\end{center}
\caption{Example \ref{ex5}: $d=7$; $\nu^*$ uniform on the unit box; relative error on the ``moments" 
$\int x_1^2d\psi^*$, $\int x_1^4d\psi^*$ and $\int x_1^2x_2^2d\psi^*$ of $\psi^*$ (normalized)\label{circle-box}}
\end{table}
}
\end{ex}
As illustrated in Example \ref{ex5}, numerical problems can be encountered relatively fast (e.g. when $d>7$ in Example \ref{ex5}).
In the next section we briefly discuss how to (partly) address some numerical issues.

\subsection{Some numerical issues}

As already mentioned in introduction, some numerical issues should be taken into account.
Indeed, in the above simple examples one has encountered some numerical difficulties when $d> 10$
(i.e. when moments of order $>20$ appear) and even when $d>7$ for Example \ref{ex5}.
Such numerical problems were not due to the size but rather to the ill-conditioning 
of the semidefinite program (\ref{sdp-primal}) 
(in turn due to the use of the monomial 
basis $(\x^\alpha)_{\alpha\in\N^n}$). Clearly the basis of monomials $(\x^\alpha)$, $\alpha\in\N$, mainly used for modeling and algorithm implantation convenience, is a very bad choice from a numerical viewpoint. This is especially true as we use semidefinite solvers for which such issues can be crucial even for relatively small size matrices, as observed in the examples when $d>10$.

However other (and better choices) of basis are possible. For instance let $(\L_\alpha)$, $\alpha\in\N^n$, be the basis of polynomials orthonormal
w.r.t. $\lambda$. They can be obtained from the moments $(\lambda_\alpha)_{\alpha\in\N^n}$ by simple computation of certain determinants, as described in e.g. Dunkl and Xu \cite{dunkl} and Helton et al. \cite{helton}. In this case one writes
\begin{eqnarray*}
\x\mapsto f(\x)&:=&\hat{f}_\alpha\,\L_\alpha(\x),\qquad \f\in\R[\x],\\
L_\y(f)&=&\sum_{\alpha\in\N^n}\, \hat{f}_\alpha \,y_\alpha,\qquad \forall\,f\in\R[\x],
\end{eqnarray*}
for some vector $(\hat{f}_\alpha)$ of coefficients. In this new basis $(\L_\gamma)_{\gamma\in\N^n}$, 
\[\L_\alpha(\x)\cdot\L_\beta(\x)\,=\,\sum_\gamma c^{\alpha\beta}_\gamma\,\L_\gamma(\x),\quad\alpha,\beta\in\N^n,\]
for some real scalars $(c^{\alpha\beta}_\gamma)$, and the moment matrix reads
\[\overline{\M}_d(\y)(\alpha,\beta)\,:=\,L_\y(\L_\alpha\,\L_\beta)\,=\,\sum_\gamma c^{\alpha\beta}_\gamma\,y_\gamma,\quad\alpha,\beta\in\N^n_d.\]
The advantage in doing so is that the resulting moment matrix $\overline{\M}_d(\lambda)$ in that basis 
is the identity matrix $\I_d$. Hence in the semidefinite program (\ref{sdp-primal-matrix}) the constraint $\M_d(\y)+\M_d(\u)=\M_d(\lambda)$ becomes
$\overline{\M}_d(\y)+\overline{\M}_d(\u)=\I_d$ when the moment matrices 
are expressed in the new basis $(\L_\alpha)_{\alpha\in\N^n_d}$. 

Similarly, let $(\D_\alpha)$, $\alpha\in\N^n$, be the basis of polynomials orthonormal
w.r.t. $\mu$.  Again as for the $(\L_\alpha)$, the $(\D_\alpha)$  can be obtained from the moments $(\mu_\alpha)$, and
\[\D_\alpha(\x)\cdot\D_\beta(\x)\,=\,\sum_\gamma q^{\alpha\beta}_\gamma\,\D_\gamma(\x),\quad\alpha,\beta\in\N^n.\]
for some real scalars $(q^{\alpha\beta}_\gamma)$. The resulting matrix of moments $\tilde{\M}_d(\mu)$ in that basis is also the identity $\I_d$. Of course, the vector of polynomials $\D_d=(\D_\alpha)$
and $\L_d=(\L_\alpha)$, $\alpha\in\N^n_d$, satisfy $\D_d=\Theta_d\L_d$ for some non singular matrix $\Theta_d$. Therefore, when expressed 
in the $(\D_\alpha)$ basis, the previous constraint $\M_d(\y)+\M_d(\v)=\M_d(\lambda)$ now becomes 
$\tilde{\M}_d(\y^T\Theta)+\tilde{\M}_d(\v)=\I_d$ while $\overline{\M}_d(\y)+\overline{\M}_d(\u)=\I_d$.

\subsection*{Acknowledgements}
Research  funded by by the European Research Council
(ERC) under the European Union's Horizon 2020 research and innovation program
(grant agreement ERC-ADG 666981 TAMING)"

\end{document}